\let\sma\wedge
\newcommand{\htp}{\simeq}
\renewcommand{\to}{\mathchoice{\longrightarrow}{\rightarrow}{\rightarrow}{\rightarrow}}
\newcommand{\sG}{\mathscr{G}}
\let\catsymbfont\mathcal
\newcommand{\aC}{{\catsymbfont{C}}}
\newcommand{\aD}{{\catsymbfont{D}}}
\newcommand{\aI}{{\catsymbfont{I}}}
\newcommand{\aJ}{{\catsymbfont{J}}}
\newcommand{\aO}{{\catsymbfont{O}}}
\newcommand{\aV}{{\catsymbfont{V}}}
\newcommand{\bL}{{\mathbb{L}}}
\newcommand{\Del}{\mathbf{\Delta}}
\newcommand{\Adj}{\mathbf{Adj}}
\newcommand{\Cat}{\mathbf{Cat}}
\newcommand{\hCat}{\mathbf{Cat}_h}
\def\quickop#1{\expandafter\DeclareMathOperator\csname
#1\endcsname{#1}}
\newcommand{\uSq}{\underline{\mathrm{Sq}}}
\newcommand{\uC}{\underline{\mathcal{C}}}
\numberwithin{equation}{section}
\newtheorem{thm}[equation]{Theorem}
\newtheorem*{thm*}{Theorem}
\newtheorem{cor}[equation]{Corollary}
\newtheorem{lem}[equation]{Lemma}
\newtheorem{prop}[equation]{Proposition}
\theoremstyle{definition}
\newtheorem{defn}[equation]{Definition}
\theoremstyle{remark}
\newtheorem{rem}[equation]{Remark}
\newtheorem{example}[equation]{Example}
\newtheorem{aside}[equation]{Aside}
\begin{document}

\title{Homotopical resolutions associated to deformable adjunctions}

\author{Andrew J. Blumberg}
\address{Department of Mathematics, The University of Texas,
Austin, TX \ 78712}
\email{blumberg@math.utexas.edu}
\thanks{The first author was supported in part by NSF grant DMS-0111298}
\author{Emily Riehl}
\address{Department of Mathematics, Harvard University,
Cambridge, MA \ 02138}
\email{eriehl@math.harvard.edu}
\thanks{The second author was supported in part by an NSF postdoctoral fellowship}

%\date{\draftdate} \subjclass[2000]{}
%\keywords{}

\begin{abstract}    % type your abstract below
Given an adjunction $F \dashv G$ connecting reasonable categories with
weak equivalences, we define a new derived bar and cobar construction
associated to the adjunction.  This yields homotopical models of the
completion and cocompletion associated to the monad and comonad of the
adjunction.  We discuss applications of these resolutions to spectral
sequences for derived completions and Goodwillie calculus in general
model categories.
\end{abstract}

\maketitle
\tableofcontents

\section{Introduction}

Bar and cobar resolutions are ubiquitous in modern homotopy theory.
For instance, completions of spaces and spectra with respect to
homology theories and free resolutions of operadic algebras are
examples of this kind of construction.  Formally, these bar or cobar
constructions are associated to the monad or comonad of an adjunction
$F \colon \aC \rightleftarrows \aD \colon G$.  Typically, the
categories $\aC$ and $\aD$ are \emph{homotopical}: equipped with
some well-behaved notion of weak equivalence.  Even in the best cases,
e.g., supposing that $F \dashv G$ is a simplicial Quillen adjunction
between combinatorial simplicial model categories, an immediate
problem that arises is that the comonad $FG$ is not homotopically
well-behaved.  Restricting $G$ to the fibrant-cofibrant objects of
$\aD$ does not help, as there is no reason for their images to be
cofibrant; hence, the composite $FG$ will not preserve weak
equivalences in general.  One could replace $F$ and $G$ by their
point-set derived functors $FQ$ and $GR$, where $Q$ and $R$ denote
cofibrant and fibrant replacement functors on $\aC$ and $\aD$
respectively.  But now the endofunctor $FQGR$ is not obviously a
comonad and so the cobar construction gives a cosimplicial object only
up to homotopy.  (The dual problem arises for the bar construction.)

In applications in the literature, this problem is typically
circumvented by working in situations where $\aC$ has all objects
cofibrant and $\aD$ has all objects fibrant.  However, it is not
always possible to arrange for these conditions to hold.  In this
paper, we describe an approach that completely resolves the coherence
problem on the point set level by working with cofibrant and fibrant
replacement functors which are themselves comonads and monads
respectively. An important precursor to this approach is that these
hypotheses are reasonable: on account of an ``algebraic'' small object
argument due to Richard Garner, cofibrantly generated model categories
admit such functors.  Consequently, for any Quillen adjunction
$F \dashv G$ between cofibrantly generated model categories, an
example of what we call a \emph{deformable adjunction}, there are
point-set level natural transformations that make the functors $FQGR$
and $GRFQ$ into a comonad and a monad ``up to homotopy,'' in a sense
we define below. More precisely, this data enables us to define a pair
of dual natural transformations $\iota \colon Q \to QGRFQ$ and
$\pi \colon RFQGR \to R$ which facilitate the construction of the
derived (co)bar construction as a (co)simplicial object on the point
set level. This extends work of
Radulescu-Banu~\cite{radulescu-banu-thesis} on derived completion, as
we discuss in section~\ref{sec:completions}. Related observations have
been made independently by Arone and Ching in the special case where
all objects of $\aD$ are fibrant, in which case the monad $R$ can be
taken to be the identity \cite{AroneChing2}. In this special case, the
``up to homotopy'' comonad $FQG$ becomes a comonad on the nose.

The monad and comonad resolutions associated to an adjunction  are
formally dual. A classical categorical observation tells a richer
story. There is a free (strict) 2-category $\Adj$ containing an
adjunction. The image of the 2-functor $\Adj \to \Cat$ extending the
adjunction $F \colon \aC \rightleftarrows \aD \colon G$ consists of
the monad and comonad resolutions in the functor categories
$\aC^{\aC}$ and $\aD^{\aD}$ together with a dual pair of split
augmented simplicial objects  in $\aC^{\aD}$ and $\aD^{\aC}$. The fact
that these resolutions assemble into a 2-functor says that, e.g., that
the image of the comonad resolution under $G$ is an augmented
simplicial object in $\aC^{\aD}$ that admits ``extra degeneracies.''
Our constructions produce similar data in the 2-category $\hCat$ of
categories with weak equivalences and weak equivalence preserving
functors. However, the 2-functor $\Adj \to \hCat$ is not strict;
rather it is a coherent mixture of lax and oplax in a sense that will
be described in section~\ref{sec:adjdata}. 

Our primary interest in these results stems from their applications. A
unifying theme is that previous work that required both adjoint
functors to preserve weak equivalences can now be extended to
arbitrary Quillen adjunctions between cofibrantly generated model
categories. In section~\ref{sec:completions}, we generalize the work
of Bousfield on spectral sequences for monadic completions using a new
model of derived completion given by our derived cobar construction.
A closely related issue is the connection to Quillen homology and
Goodwillie calculus in general model categories, an issue we turn to
in section~\ref{sec:calculus}.  As explained in Kuhn \cite{kuhn},
Goodwillie calculus can be carried out in fairly general settings, as
long as the formal stabilization of a category $\aC$ can be described.
However, a key part of understanding the derivatives of the identity
has to do with the completion with respect to the monad of the
accompanying $\Sigma^\infty\dashv \Omega^\infty$ adjunction.  Our
results allow us to describe this completion in any cofibrantly
generated model category.  As a representative application, we
generalize one of the main technical tools used in the Arone-Ching
approach to the chain rule~\cite{AroneChing}.

Sections \ref{sec:htpical} and \ref{sec:background} present background
material on homotopical functors and the algebraic small object
argument. In section \ref{sec:htpyresol}, we introduce the homotopical
resolutions around which our applications will center. In
section \ref{sec:simplicial} we prove a result that may be of
independent interest: that a cofibrantly generated simplicial model
category admits a simplicially enriched fibrant replacement monad and
cofibrant replacement comonad. This extends observations made
by \cite{RSS} and others.  Because later sections presuppose
simplicial enrichments, the appendix~\ref{appendix:simp} reviews how
to use the technology of \cite{RSS} and \cite{dugger} to replace a
Quillen adjunction between reasonable model categories by a simplicial
Quillen adjunction between simplicial model categories.

\subsection*{Acknowledgements}

The authors would like to thank Sam Isaacson, Mike Mandell, Haynes
Miller, and Dominic Verity for helpful conversations. The first author was supported in part by NSF grants
DMS-0906105 and DMS-1151577. The second author was supported in part by an NSF postdoctoral fellowship~DMS-1103790.

\section{Homotopical categories and derived functors}\label{sec:htpical}

For the reader's convenience, we briefly review a few insights presented in \cite{DHKS} that axiomatize and generalize the construction of derived functors between model categories. The authors make a persuasive case that their \emph{homotopical categories} are a good setting to understand derived functors. The familiar construction of derived Quillen functors generalizes seamlessly and this extra generality is useful: derived colimit and limit functors, more commonly referred to as homotopy colimits and limits, are frequently desired for diagram categories that might not admit appropriate model structures.

A \emph{homotopical category} is a category equipped with some class of weak equivalences satisfying the 2-of-3 property, or occasionally, as in \cite{DHKS}, the stronger 2-of-6 property. A homotopical category $\aC$ has an associated \emph{homotopy category} $\Ho\aC$ which is the formal localization at this class of morphisms. The associated localization functor $\aC \to \Ho\aC$ is universal among functors with domain $\aC$ that map the weak equivalences to isomorphisms. A functor $F \colon \aC \to \aD$ between homotopical categories is \emph{homotopical} if it preserves weak equivalences; elsewhere in the literature such $F$ are called \emph{homotopy functors}. In this case, $F$ descends to a unique functor $F \colon \Ho\aC \to \Ho\aD$ that commutes with the localization maps.

\subsection*{Derived functors between homotopical categories}

Of course a generic functor $F \colon \aC \to \aD$ between homotopical categories might fail to be homotopical, in which case it is convenient to have a ``closest'' homotopical approximation. Formally, a \emph{total left derived functor} of $F$ is a \emph{right} Kan extension of $\aC \xrightarrow{F} \aD \to \Ho\aD$ along the localization $\aC \to \Ho\aC$. In practice, it is easier to work with functors between the homotopical categories rather than functors between the associated homotopy categories. With this in mind, a (\emph{point-set}) \emph{left derived functor} is a functor $\bL F \colon \aC \to \aD$ together with a natural transformation $\bL F \to F$ so that the composite with the localization $\aD \to \Ho\aD$ is a total left derived functor of $F$. \emph{Total} and \emph{point-set right derived functors} are defined dually;  the total right derived functor is a left Kan extension.

Derived functors need not exist in general but when they do it is often for the following reason. Frequently,  a non-homotopical functor nonetheless preserves all weak equivalences between certain ``good'' objects. Supposing this is the case, if the category admits a reflection into this subcategory of ``good'' objects, then precomposition with the reflection gives a point-set level derived functor. The following definitions make this outline precise.

\begin{defn} A \emph{left deformation} on $\aC$ consists of a functor $Q \colon \aC \to \aC$ together with a natural weak equivalence $q \colon Q \to 1$. A \emph{right deformation} consists of a functor $R \colon \aC \to \aC$ and a natural weak equivalence $r \colon 1 \to R$.
\end{defn}

The notation is meant to suggestion cofibrant and fibrant replacement respectively. It follows from the 2-of-3 property that the functors $Q$ and $R$ are homotopical and therefore induce adjoint equivalences between $\Ho\aC$ and the homotopy category of any full subcategory containing the image of $Q$ and of $R$. 

In the presence of a left deformation $(Q,q)$ on $\aC$, a functor $F \colon \aC \to \aD$ is \emph{left deformable} if $F$ preserves all weak equivalences between objects in the image of $Q$. The key point is that no additional structure is needed to produce point-set derived functors.

\begin{prop} If $F$ is left deformable with respect to $(Q,q)$, then $Fq \colon FQ \to F$ is a point-set left derived functor of $F$.
\end{prop}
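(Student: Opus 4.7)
The plan is to verify that $FQ$, equipped with $Fq \colon FQ \to F$, is a point-set left derived functor of $F$ by checking the universal property of a right Kan extension directly. First I would show that $FQ$ is itself homotopical: by the 2-of-3 property applied to the natural weak equivalence $q$, the functor $Q$ is homotopical, so for any weak equivalence $f \colon c \to c'$ in $\aC$ the map $Qf$ is a weak equivalence between objects in the image of $Q$, and left deformability of $F$ then forces $FQf$ to be a weak equivalence in $\aD$. Consequently $FQ$ descends to a functor $\overline{FQ} \colon \Ho\aC \to \Ho\aD$ with $\overline{FQ}\gamma_\aC = \gamma_\aD FQ$, and the natural transformation $Fq$ induces $\gamma_\aD Fq \colon \overline{FQ}\gamma_\aC \to \gamma_\aD F$, the candidate counit of the Kan extension.

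To establish universality, I would invoke the universal property of the localization $\gamma_\aC \colon \aC \to \Ho\aC$, which identifies natural transformations $G \to \overline{FQ}$ between functors $\Ho\aC \to \Ho\aD$ with natural transformations $G\gamma_\aC \to \gamma_\aD FQ$. It thus suffices to show that for any homotopical functor $K \colon \aC \to \Ho\aD$ (playing the role of $G\gamma_\aC$) and any $\alpha \colon K \to \gamma_\aD F$, there is a unique $\beta \colon K \to \gamma_\aD FQ$ satisfying $\gamma_\aD Fq \circ \beta = \alpha$. The crucial observation is that since $K$ is homotopical, $Kq_c$ is an isomorphism in $\Ho\aD$ for every $c$, which allows me to define $\beta_c := \alpha_{Qc} \circ (Kq_c)^{-1}$. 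Naturality of $\beta$ would follow from naturality of $\alpha$ at $Qf$ combined with naturality of $q$ at $f$, and the factorization identity $\gamma_\aD Fq_c \circ \beta_c = \alpha_c$ reduces to the naturality square for $\alpha$ at $q_c$.

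The step I expect to require the most care is uniqueness, because $Fq_c$ need not itself be a weak equivalence for arbitrary $c$ (the target $c$ need not lie in the image of $Q$). The key point is that $Fq_{Qc} \colon FQ^2 c \to FQc$ \emph{is} a weak equivalence, since both $Q^2 c$ and $Qc$ lie in the image of $Q$. So if $\beta, \beta'$ are two candidate factorizations, evaluating the identity $\gamma_\aD Fq \circ \beta = \gamma_\aD Fq \circ \beta'$ at the object $Qc$ and using invertibility of $\gamma_\aD Fq_{Qc}$ forces $\beta_{Qc} = \beta'_{Qc}$; the general case $\beta_c = \beta'_c$ then follows from naturality of $\beta$ and $\beta'$ at $q_c \colon Qc \to c$ together with the invertibility of $Kq_c$. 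Apart from this mild detour through $Qc$, the verification amounts to a straightforward diagram chase once the inversion trick $\beta_c = \alpha_{Qc} \circ (Kq_c)^{-1}$ is in hand.
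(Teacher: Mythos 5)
Your proof is correct, and since the paper's own ``proof'' is just the instruction ``follow your nose or see~\cite[\S 2]{riehlcathtpythy},'' a direct verification of the Kan extension universal property is exactly what nose-following yields. You correctly reduce, via the 2-dimensional universal property of the localization $\gamma_{\aC}\colon\aC\to\Ho\aC$, to showing that post-composition with $\gamma_{\aD}Fq$ induces a bijection between natural transformations $K\to\gamma_{\aD}FQ$ and $K\to\gamma_{\aD}F$ for homotopical $K\colon\aC\to\Ho\aD$, and the inversion formula $\beta_c=\alpha_{Qc}\circ(Kq_c)^{-1}$ together with your uniqueness argument (exploiting that $Fq_{Qc}$ \emph{is} a weak equivalence because $Q^2c$ and $Qc$ both lie in the image of $Q$, then transporting along $q_c$ using invertibility of $Kq_c$) is precisely the delicate point, handled correctly.

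It is worth noting, however, that the cited source and related treatments (DHKS, Maltsiniotis) organize the argument differently: they factor through the equivalence $\Ho\aC_Q\simeq\Ho\aC$ induced by the deformation, where $\aC_Q$ is the full subcategory on the image of $Q$. That structural route proves something stronger than the Proposition as stated, namely that the total left derived functor is an \emph{absolute} right Kan extension, preserved by every functor out of $\Ho\aD$. The paper's immediately following Aside relies on exactly this absoluteness (e.g.\ to deduce that total derived adjoints of a deformable adjunction remain adjoint). Your direct verification is perfectly adequate for the Proposition itself, but does not yield absoluteness; if you later need it, the extra observation is that after localization $q\colon Q\to 1$ exhibits $\bar Q$ as right adjoint (indeed adjoint equivalence inverse) to the inclusion $\Ho\aC_Q\hookrightarrow\Ho\aC$, and Kan extensions along functors admitting adjoints are absolute.
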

\begin{proof} Follow your nose or see \cite[\S 2]{riehlcathtpythy}.
\end{proof}

\begin{aside} Total derived functors constructed via deformations are better behaved categorically than generic derived functors: the total left derived functor of a functor admitting a left deformation is an \emph{absolute right Kan extension}, i.e., is preserved by any functor. It follows, for instance, that if $F$ and $G$ are an adjoint pair of functors admitting a left and right deformation, respectively, then their total left and right derived functors are adjoints \cite{maltsiniotis}. To the authors' surprise, it does not appear to be possible to prove an analogous result without this stronger universal property, or without some specific knowledge of how the derived functors are constructed, which amounts to the same thing.
\end{aside}

\section{The algebraic small object argument}\label{sec:background}

Our constructions of homotopical resolutions exploit a fact that is not well-known: a large class of model structures admit a fibrant replacement monad and a cofibrant replacement comonad. More precisely, any cofibrantly generated model structure on a category that \emph{permits the small object argument}, a choice of two set-theoretical conditions described below, has a fibrant replacement monad and a cofibrant replacement comonad. These are constructed by a variant of Quillen's small object argument due to Richard Garner. Our main results exploit the following immediate corollary to his Theorem \ref{thm:garner} below.

\begin{cor}\label{cor:main} Suppose $\aC$ is a model category that permits the small object argument equipped with a set $\aI$ of generating cofibrations and a set $\aJ$ of trivial cofibrations that detect fibrant objects, in the sense that an object is fibrant if and only if it lifts against the elects of $\aJ$. Then there is a monad $(R, r \colon 1 \to R, \mu \colon R^2 \to R)$ and a comonad $(Q, q \colon Q \to 1, \delta \colon Q \to Q^2)$ on $\aC$ such that for all objects $X$ \begin{enumerate} \item $RX$ is fibrant and $QX$ is cofibrant \item $r_X \colon X \to RX$ is a trivial cofibration and $q_X \colon QX \to X$ is a trivial fibration \item $\mu_X \colon R^2X \to RX$ and $\delta_X \colon QX \to Q^2X$ are weak equivalences  \end{enumerate} Furthermore, $R$ and $Q$ are homotopical and hence descend to a monad and comonad on $\Ho\aC$.
\end{cor}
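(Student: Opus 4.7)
The plan is to invoke Garner's algebraic small object argument (Theorem~\ref{thm:garner}) twice: once applied to $\aI$ to produce a functorial factorization of every map as a cofibration followed by a trivial fibration whose left half is (freely) coalgebraic for a comonad $Q$, and once applied to $\aJ$ to produce a factorization as a trivial cofibration followed by a map with the right lifting property against $\aJ$ whose right half is (freely) algebraic for a monad $R$. Factoring $\emptyset \to X$ via the first gives the cofibrant replacement $QX \to X$ with comonad counit $q$ and comultiplication $\delta$; factoring $X \to *$ via the second gives $X \to RX$ with monad unit $r$ and multiplication $\mu$. Conclusions (i) and (ii) are then immediate: the right factor of the $\aI$-factorization is a trivial fibration because $\aI$ generates the cofibrations, and the right factor of the $\aJ$-factorization is a fibration precisely because $\aJ$ detects fibrant objects by hypothesis, so its left factor is a trivial cofibration by the two-out-of-three closure of weak equivalences among maps with the relevant lifting properties.

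For (iii), I would argue purely formally from the (co)monad axioms. The unit law $\mu \circ rR = 1_R$ exhibits $\mu_X$ as a retraction of $r_{RX} \colon RX \to R^2 X$, which is a trivial cofibration by (ii) and hence a weak equivalence; 2-of-3 then forces $\mu_X$ to be a weak equivalence. The counit law $qQ \circ \delta = 1_Q$ yields the dual conclusion for $\delta$, since each $q_{QX}$ is a trivial fibration.

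Homotopicality of $R$ is an equally formal consequence: given a weak equivalence $f \colon X \to Y$, the naturality square for $r$ has both vertical components $r_X, r_Y$ weak equivalences and top edge $f$ a weak equivalence, so 2-of-3 makes $Rf$ a weak equivalence; dually for $Q$. Homotopicality then guarantees that $R$ and $Q$ descend to endofunctors on $\Ho\aC$, and the monad/comonad axioms descend automatically. The only real obstacle is conceptual rather than computational, namely having Garner's theorem on hand to produce (co)monadic replacements in the first place; once that is granted, every remaining verification is a one-line application of 2-of-3.
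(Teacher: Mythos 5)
Your proposal is correct in outline and follows essentially the same route as the paper's terse proof: run Garner's algebraic small object argument twice, restrict the resulting (co)monadic factorizations to $\emptyset \to X$ and $X \to *$, and derive (iii) and homotopicality from (ii) by 2-of-3. The diagram-chase for (iii) using the unit law $\mu_X \circ r_{RX} = 1_{RX}$ and the counit law $q_{QX} \circ \delta_X = 1_{QX}$ is exactly right, as is the naturality-square argument for homotopicality.

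One slip in the justification of (ii): your claim that $r_X$ is a trivial cofibration ``by the two-out-of-three closure of weak equivalences among maps with the relevant lifting properties'' does not parse, and 2-of-3 genuinely plays no role in that step (neither $X \to *$ nor $RX \to *$ is a weak equivalence in general, so you cannot bootstrap from the right factor being a fibration). The correct argument is one of the following two equivalent ones. First, $r_X$ lies in the class $\aJ\text{-cof}$ of coalgebraically cellular maps; since every element of $\aJ$ is a trivial cofibration and the class of trivial cofibrations in a model category is saturated (being characterized by the left lifting property against fibrations), every map in $\aJ\text{-cof}$ is a trivial cofibration. Alternatively, $r_X$ is a cofibration (being built from the cofibrations in $\aJ$) with the left lifting property against every $\aJ$-injective map, and the fibrations form a subclass of the $\aJ$-injectives because $\aJ$ consists of trivial cofibrations; a cofibration with LLP against all fibrations is by definition a trivial cofibration. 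Either fix is a one-liner, so this is a local repair rather than a structural problem, but as written the step is not a proof.
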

\begin{proof}
(i) and (ii) follow from the construction of Theorem \ref{thm:garner} below. (ii) implies (iii) and the fact that the functors are homotopical by the 2-of-3 property.
\end{proof}

In the more general setting of homotopical categories, we might call $(R,r,\mu)$ a \emph{homotopical monad} and $(Q,q,\delta)$ a \emph{homotopical comonad}, meaning that the natural transformations are weak equivalences and the functors preserve weak equivalences.

\begin{rem}
Of course, a set of generating trivial cofibrations would suffice for $\aJ$, but for certain examples the weaker hypothesis is preferred.  For instance, the inner horn inclusions detect fibrant objects for Joyal's model structure on simplicial sets: an $\infty$-category is precisely a simplicial set that has the extension property with respect to this set of arrows. Algebras for the fibrant replacement monad they generate are ``algebraic $\infty$-categories,'' i.e., $\infty$-categories with a specified filler for every inner horn. By contrast, an explicit set of generating trivial cofibrations is not known, though it can be proven to exist \cite[\S A.2.6]{HTT}.
\end{rem}

\subsection*{Garner's small object argument} 

The monad $R$ and comonad $Q$ are constructed using a refinement of Quillen's small object argument due to Richard Garner \cite{GarnerUSOA}. Given a set of arrows $\aI$ in a category satisfying a certain set-theoretical condition, Quillen's small object argument constructs a functorial factorization such that the right factor of any map satisfies the right lifting property with respect to $\aI$ and such that the left factor is a relative $\aI$-cell complex, i.e., is a transfinite composite of pushouts of coproducts of maps in $\aI$. 

Garner's small object argument constructs the functorial factorization in such a way that the functor sending a map to its right factor is a monad on the category of arrows and the functor sending a map to its left factor is a comonad. The right factor of some map is a (free) algebra for this monad, which implies that it lifts against the generating arrows. The left factor of some map is a (free) coalgebra for the comonad; this is analogous to the usual cellularity condition. Because the comonad and monad extend a common functorial factorization, it is easy to prove that  the left factor lifts against any map the lifts against the generating arrows. 

These results require that the construction described below \emph{converges}. The set theoretical hypotheses encoded by our phrase ``permits the small object argument'' are designed to guarantee that this is the case.  

\begin{thm}[Garner]\label{thm:garner} Let $\aC$ be a cocomplete category satisfying either of the following conditions: \begin{itemize} \item[$(*)$] Every $X \in \aC$ is $\kappa_X$-presentable for some regular cardinal $\kappa_X$.   \item[$(\dagger)$] Every $X \in \aC$ is $\kappa_X$-bounded with respect to some well-copowered orthogonal factorization system on $\aC$ for some regular cardinal $\kappa_X$. \end{itemize} Then any set of arrows generates a functorial factorization \[\xymatrix{ X \ar[rr]^f \ar[dr]_{Cf} & & Y \\ & Ef \ar[ur]_{Ff}}\]
where the left-hand functor $C$ is a comonad and the right-hand functor $F$ is a monad on the category of arrows in $\aC$.
\end{thm}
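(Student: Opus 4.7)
The plan is to follow Garner's refinement of Quillen's small object argument: present one step of cell attachment as a pointed endofunctor on the arrow category, then freely turn this into a monad by a transfinite iteration that records which cells have already been glued on, using the hypothesis $(*)$ or $(\dagger)$ only to guarantee convergence. The comonad structure on the left factor will then be essentially automatic.

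First I would set up the single-step factorization. Given $f\colon X \to Y$, form the pushout
\[
\xymatrix{ \bigsqcup_s A_s \ar[r] \ar[d]_{\sqcup i_s} & X \ar[d]^f \\ \bigsqcup_s B_s \ar[r] & E_1 f}
\]
indexed by all commutative squares $s$ from some $i_s\colon A_s \to B_s$ in $\aI$ to $f$; the universal property produces a factorization $X \xrightarrow{C_1 f} E_1 f \xrightarrow{F_1 f} Y$, natural in $f$. This yields an endofunctor $F_1$ on the arrow category $\aC^{\mathbf{2}}$ together with a natural transformation $\eta\colon \mathrm{id} \to F_1$ (the square with top $C_1 f$ and bottom $1_Y$), exhibiting $(F_1, \eta)$ as a pointed endofunctor. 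The key observation is that a section of $\eta_f$ is precisely a choice of lift for every square from $\aI$ to $f$, so algebras for this pointed endofunctor are exactly maps equipped with structured lifts against $\aI$.

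Second I would build the free monad on $(F_1, \eta)$. The naive transfinite iteration of $F_1$ only produces a pointed endofunctor and redundantly re-attaches cells glued on at earlier stages, which is why Quillen's argument does not yield a monad. Garner's refinement assembles a transfinite sequence $F_\alpha$ in which at each successor stage the pushout is indexed only by those squares whose top leg does not already factor through cells attached up to stage $\alpha$. Under $(*)$, the presentability of each domain $A_s$ forces every such top leg into a sufficiently long sequential colimit to factor through some earlier stage, so the sequence stabilizes and the colimit $F = \colim_\alpha F_\alpha$ inherits a strictly associative multiplication $\mu\colon F^2 \to F$; hypothesis $(\dagger)$ gives the analogous conclusion, using boundedness with respect to the factorization system together with well-copoweredness to cap the size of the indexing sets of squares.

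The comonad structure on $C$ then comes essentially for free within the algebraic weak factorization system framework of \cite{GarnerUSOA}: at each stage $C_\alpha f$ is visibly a transfinite composite of pushouts of coproducts of maps from $\aI$, hence carries a canonical free coalgebra structure, and the comultiplication $\delta\colon C \to C^2$ encodes this cellular presentation; compatibility between $\mu$ and $\delta$ assembles $(C,F)$ into a functorial factorization of the desired form. The main obstacle is the bookkeeping in the middle step: setting up the ``already attached'' relation on squares so that the transfinite process both converges and produces a strict (not merely homotopy-coherent) monad, and verifying simultaneously that the left factor inherits the dual coalgebra structure. Rather than reproduce this delicate construction in detail, I would outline the ingredients above and refer the reader to Garner's original paper for the technical verification.
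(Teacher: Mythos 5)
Your plan matches the paper's treatment in its broad strokes: the paper states Theorem~\ref{thm:garner} as a citation to Garner's work and only sketches the construction, and your outline of the step-one pushout factorization, the passage to the free monad on the pointed endofunctor $F_1$, and the use of $(*)$ or $(\dagger)$ to guarantee convergence follows the same narrative, likewise deferring the technical verification to Garner.

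There is, however, a genuine inaccuracy in your description of the refinement at successor stages. You say the stage-$(\alpha+1)$ pushout is ``indexed only by those squares whose top leg does not already factor through cells attached up to stage $\alpha$.'' That is precisely the simplified picture the paper gives in its example for the special case where the cofibrations are monomorphisms, in which ``already filled'' is detectable. Garner's actual construction, as recorded in the displayed diagram~\eqref{eq:steptwo}, proceeds differently: one forms Quillen's step-two left factor $C_1F_1f\cdot C_1f$ and then takes the coequalizer of the two evident maps from $C_1f$, thereby \emph{quotienting out} the redundantly attached cells rather than refraining from attaching them. The two descriptions agree when cofibrations are monomorphisms (and the paper notes this coincides with Radulescu-Banu's construction), but in a general cocomplete category one cannot simply restrict the index set, and the coequalizer is what makes the free monad strict. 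If you were to reproduce the argument rather than cite it, this is the step that would need correcting. A minor terminological slip: an algebra structure for the pointed endofunctor $(F_1,\eta)$ on an object $f$ is a \emph{retraction} of $\eta_f$, not a section.
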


We say a category $\aC$ \emph{permits the small object argument} if it is cocomplete and satisfies either $(*)$ or $(\dagger)$. Locally presentable categories such as {\bf sSet} satisfy $(*)$. Locally bounded categories such as {\bf Top}, {\bf Haus}, and {\bf TopGp}   satisfy $(\dagger)$. We don't know of a category that permits Quillen's small object argument but fails to satisfy these conditions; hence, when we refer to a cofibrantly generated model category, we tacitly suppose that the category permits the small object argument in this sense.

The functors constructed by Garner's small object argument satisfy two universal properties, which are instrumental in describing the applications of this small object argument to model categories \cite{Riehl}. One ensures that algebras for the monad are precisely those arrows which have the right lifting property with respect to each of the generators. The other states that the coalgebra structures assigned the generating arrows are initial in some precise sense. 

\begin{aside} In fact, Garner's construction works for any small \emph{category} of arrows. Morphisms between the generating arrows can be used to encode coherence requirements for the lifting properties characterizing the right class. In particular, there exist model categories that are provably not cofibrantly generated (in the classical sense) whose cofibrations and trivial cofibrations are generated by categories of arrows \cite[\S 4]{Riehl}.
\end{aside}

Garner's construction begins in the same way as Quillen's: to factor an arrow $f$, first form the coproduct of arrows $i \in \aI$ indexed by commutative squares from $i$ to $f$.  Then push out this coproduct along the canonical map from its domain to the domain of $f$. This defines  functors $C_1$ and $F_1$ that give the \emph{step-one factorization} of $f$ displayed below.
\begin{equation}\label{eq:stepone}\xymatrix{ \cdot \ar[d]_{\coprod\limits_{i \in \aI} \coprod\limits_{\mathrm{Sq}(i,f)}i} \ar[r] \ar@{}[dr]|(.8){\ulcorner}& \dom f \ar@{=}[r] \ar[d]^{C_1f} & \dom f \ar[d]^f \\ \cdot \ar[r] & E_1f \ar[r]_{F_1f} & \cod f}\end{equation}

Next, repeat this process with $F_1f$ in place of $f$, factoring $F_1f$ as $C_1F_1f$ followed by $F_1F_1f$. The composite $C_1F_1f\cdot C_1 f$ is the left factor and $F_1F_1f$ is the right factor in Quillen's step-two factorization. By contrast, the left factor $C_2f$ in Garner's step-two factorization is a quotient of Quillen's left factor, defined by  the coequalizer:
\begin{equation}\label{eq:steptwo}\xymatrix@C=40pt{ \dom f \ar[d]_{C_1f} \ar@{=}[r] & \dom f \ar[d]^{C_1F_1f \cdot C_1f} \ar@{=}[r] & \dom f \ar@{-->}[d]^{C_2f} \ar@{=}[r] & \dom f \ar[d]^f \\ E_1 f \ar@/_3pc/[rrr]_{F_1f} \ar@<.5ex>[r]^{C_1F_1f} \ar@<-.5ex>[r]_(.55){E_1(C_1f,1)} & E_1 F_1f  \ar@/_1pc/[rr]_{F_1F_1f} \ar@{-->}[r] & E_2f \ar@{-->}[r]^{F_2f} & \cod f}\end{equation} Here $C_2f$ is defined to be the coequalizer in the arrow category of the pair of maps from $C_1f$ to Quillen's step-two left factor. The right factor $F_2f$ is defined via the universal property of the coequalizer.

As is the case for Quillen's small object argument, the step-two left factor is in the weakly saturated class $\aI$-cof generated by $\aI$, though this is not obvious. The reason is that $C_1f$ and $C_1F_1f \cdot C_1f$ are canonically coalgebras for the comonad $C$ that will be produced at the termination of Garner's small object argument. The maps in the coequalizer diagram are maps of $C$-coalgebras. Hence, the colimit $C_2f$ is canonically a $C$-coalgebra and hence a member of $\aI$-cof. 

For any $\alpha$ that is the successor of a successor ordinal, the step-$\alpha$ factorization is defined analogously: $C_\alpha$ is a quotient of  $C_1F_{\alpha -1}\cdot C_{\alpha-1}$.  When $\alpha$ is a limit ordinal, the step-$\alpha$ left factor is a quotient of the colimit of the previous left factors. The factorizations for successors of limit ordinals are similarly constructed by a coequalizer. See \cite{GarnerUSOA} for more details.

Garner proves that under the hypotheses of Theorem \ref{thm:garner} this process \emph{converges}; in other words, there is no need to specify an artificial halting point. In his proof, he shows that this construction coincides with a known procedure for forming a free monad on the pointed endofunctor $F_1$; hence, the right factor is a monad and the resulting factorization satisfies the universal properties mentioned above.  

\begin{example} When the cofibrations are monomorphisms, there is a simpler description of this construction. A prototypical example is given by the generators $\aI = \{ \partial\Delta^n \to \Delta^n\}$ in the category of simplicial sets. The functor $C_1$ attaches simplices to fill every sphere in the domain of $f$. The functor $C_1F_1$ again attaches fillers for all spheres in this new space---including those which were filled in step one. The coequalizer (\ref{eq:steptwo}) identifies the simplices attached to the same spheres in step one and step two. So the effect of Garner's construction is that in step two the only spheres which are filled are those which were not filled in step one. The modification to the rest of the construction is similar. In the case where every cofibration is a monomorphism, Garner's small object argument coincides with the construction described in \cite{radulescu-banu-thesis}.
\end{example}

\subsection*{Fibrant and cofibrant replacement} The functorial factorizations of Theorem \ref{thm:garner} can be specialized to construct a fibrant replacement monad $R$ and a cofibrant replacement comonad $Q$. Because the monad $F$ produced by the small object argument preserves codomains, restriction to the subcategory of maps over the terminal object defines a monad on the category $\aC$.  The components of the unit of the monad are the left factors of the maps $X \to *$. When the generators are a set of trivial cofibrations that detect fibrant objects, the resulting monad $R$ has the properties of Corollary \ref{cor:main}. 

The cofibrant replacement comonad is obtained dually. Restricting the domain-preserving comonad $C$ of Theorem \ref{thm:garner} to the subcategory of maps under the initial object produces a comonad on $\aC$. The components of the counit are given by the right factors of the maps $\emptyset \to X$. When this construction is applied to the generating cofibrations, these right factors are trivial fibrations and the comonad $Q$ gives a cofibrant replacement.

\begin{aside} The monad $F$ and comonad $C$ constructed in Theorem \ref{thm:garner} satisfy further compatibility conditions. Provided the generating trivial cofibrations are \emph{cellular} cofibrations---it suffices that these maps are elements of $\aI$-cell---there is a canonical \emph{natural transformation} $RQ \to QR$. By contrast, for generic fibrant and cofibrant replacements, there is always a comparison map $RQX \to QRX$ but it need not be natural. Furthermore this map is distributive law of the monad $R$ over the comonad $Q$. As a formal consequence,  the monad $R$ lifts to a monad on the category of coalgebras for $Q$, defining ``fibrant replacement'' on the category of ``algebraically cofibrant objects.'' Dually, the comonad $Q$ lifts to a comonad on the category of ``algebraically fibrant objects.''  Coalgebras for the lifted comonad  correspond to algebras for the lifted monad, defining a category of ``algebraically fibrant-cofibrant objects''  \cite[\S 3.1]{Riehl}.
\end{aside}

\section{Homotopical resolutions}\label{sec:htpyresol}

Given an adjoint pair of functors $F \colon \aC \rightleftarrows \aD \colon G$, the unit and counit maps,\footnote{A natural transformation $\alpha \colon H \to K$ gives rise to natural transformations $G\alpha \colon GH \to GK$ and $\alpha F \colon HF \to KF$ by a procedure known as ``whiskering'' (assuming these composite functors are well-defined). The components of $G\alpha$ are the images of the components of $\alpha$ under the functor $G$; the components of $\alpha F$ are the components of $\alpha$, restricted to objects in the image of $F$. To save space, and because the domains and codomains of the natural transformation leave no room for ambiguity, we refer to $\alpha F$ as simply $\alpha$.}
 by the triangle identities, give rise to a coaugmented cosimplicial object in $\aC^{\aC}$ 
\[\xymatrix@C=60pt{ 1 \ar[r]|-{\eta} & GF \ar@<2ex>[r]|-{\eta} \ar@<-2ex>[r]|-{GF\eta} & GFGF \ar[l]|-{G\epsilon} \ar@<4ex>[r]|-{\eta} \ar[r]|-{GRF\eta} \ar@<-4ex>[r]|-{GFGF\eta} & \ar@<2ex>[l]|-{GFG\epsilon} \ar@<-2ex>[l]|-{G\epsilon} GFGFGF \cdots}\]
 an augmented simplicial object in $\aD^{\aD}$ 
\[\xymatrix@C=60pt{ 1 & FG \ar[l]|-{\epsilon} \ar[r]|-{F\eta} & FGFG \ar@<-2ex>[l]|-{\epsilon} \ar@<2ex>[l]|-{FG\epsilon} \ar@<2ex>[r]|-{F\eta} \ar@<-2ex>[r]|-{FGF\eta} & \ar@<4ex>[l]|-{FGFG\epsilon} \ar[l]|-{FG\epsilon} \ar@<-4ex>[l]|-{\epsilon} FGFGFG \cdots}\]
and augmented simplicial objects in $\aD^{\aC}$ and $\aC^{\aD}$ admitting forwards and backwards contracting homotopies
\[\xymatrix@C=60pt{ F \ar@<-1ex>[r]|-{F\eta} & FGF \ar@<-1ex>[l]|-{\epsilon} \ar@<1ex>[r]|-{F\eta} \ar@<-3ex>[r]|-{FGF\eta} & FGFGF \ar@<-3ex>[l]|-{\epsilon} \ar@<1ex>[l]|-{FG\epsilon} \ar@<3ex>[r]|-{F\eta} \ar@<-1ex>[r]|-{FGF\eta} \ar@<-5ex>[r]|-{FGFGF\eta} & \ar@<3ex>[l]|-{FGFG\epsilon} \ar@<-1ex>[l]|-{FG\epsilon} \ar@<-5ex>[l]|-{\epsilon} FGFGFGF \cdots}\]
\[\xymatrix@C=60pt{ G \ar@<1ex>[r]|-{\eta} & GFG \ar@<3ex>[r]|-{\eta} \ar@<-1ex>[r]|-{GF\eta} \ar@<1ex>[l]|-{G\epsilon} & GFGFG \ar@<-1ex>[l]|-{G\epsilon} \ar@<5ex>[r]|-{\eta} \ar@<1ex>[r]|-{GRF\eta} \ar@<-3ex>[r]|-{GFGF\eta} \ar@<3ex>[l]|-{GFG\epsilon} & \ar@<1ex>[l]|-{GFG\epsilon} \ar@<-3ex>[l]|-{G\epsilon} \ar@<5ex>[l]|-{GFGFG\epsilon} GFGFGFG \cdots}\]
Here the contracting homotopies, also called ``splittings'' or ``extra degeneracies,'' are given by the bottom and top $\eta$'s respectively.  It follows that the geometric realization or homotopy invariant realization of the simplicial objects spanned by the objects in the image of $FGF$ and $GFG$ are simplicial homotopy equivalent to $F$ and $G$. Dual results apply to the (homotopy invariant) totalization of the cosimplicial object spanned by these same objects; in this case the ``extra codegeneracies'' are given by the top and bottom $\epsilon$'s.

Collectively, these diagrams display the image of a 2-functor whose domain is $\Adj$, the free 2-category containing an adjunction \cite{schanuelstreet}. More precisely, each diagram is the image of one of the  four hom-categories of this two object 2-category.

Historically, these resolutions have wide applications. But for a generic adjoint pair of functors between homotopical categories, or even for a Quillen adjunction, the functors involved in this construction are not homotopical. In particular, weakly equivalent objects need not induce weakly equivalent (co)simplicial resolutions. The usual fix---replacing $F$ and $G$ with point-set left and right derived functors---inhibits the definition of the structure maps of the (co)simplicial objects. One might attempt to define these maps as zig-zags but the backwards maps won't be weak equivalences. However if the left deformation for $F$ extends to a homotopical comonad and the right deformation for $G$ is a homotopical monad then we shall see that it is possible to constructed derived (co)simplicial resolutions.

\subsection*{Constructing the resolutions}

The following definition encodes our standing hypotheses.

\begin{defn}\label{defn:defadj}
Suppose that $\aC$ and $\aD$ are homotopical categories such that
$\aC$ is equipped with a homotopical comonad $(Q,q,\delta)$ and $\aD$
is equipped with a homotopical monad $(R,r,\mu)$.  In this context, an
adjunction $(F, G, \eta \colon 1 \to GF, \epsilon \colon FG \to 1)$ is
a \emph{deformable adjunction} if $F$ is left deformable with respect
to $(Q,q)$ and $G$ is right deformable with respect to $(R,r)$.
\end{defn}

For instance, these conditions are satisfied if $\aC$ and $\aD$ are cofibrantly generated model categories  and $F \dashv G$ is any Quillen adjunction.  Note, our definition is stronger than the usage of this term in \cite{DHKS} or \cite{Shulman} because we have asked that the deformations extend to (co)monads.

For any deformable adjunction there exists a pair of dual natural transformations $\iota \colon Q \to QGRFQ$ and $\pi \colon RFQGR \to R$ defined by 
\[ \iota \colon Q \stackrel{\delta}{\to} Q^2 \stackrel{Q\eta}{\to} QGFQ \stackrel{QGr}{\to} QGRFQ\] and \[ \pi \colon RFQGR \stackrel{RFq}{\to} RFGR \stackrel{R\epsilon}{\to} R^2 \stackrel{\mu}{\to} R\] which satisfy the following conditions:

\begin{lem}\label{lem:simpid1} The composites \[RFQ \stackrel{RF\iota}{\to} RFQGRFQ \stackrel{\pi}{\to} RFQ\] and \[QGR \stackrel{\iota}{\to} QGRFQGR \stackrel{QG\pi}{\to} QGR\] are identities.
\end{lem}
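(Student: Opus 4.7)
My plan is to prove both identities by direct computation: expand each composite using the definitions of $\iota$ and $\pi$ into a string of whiskered natural transformations, and then collapse the string to the identity by successive applications of naturality squares, followed by the (co)unit axioms for $Q$ and $R$ together with one triangle identity for the adjunction.  The structure is exactly that of the classical argument showing that a top or bottom face-degeneracy composite in the two-sided bar/cobar construction is the identity, now interleaved with the homotopical replacements.

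For the first identity, unpacking $\pi_{FQ}\circ(RF\iota)$ yields a composable string
\[RFQ\xrightarrow{RF\delta}RFQ^2\xrightarrow{RFQ\eta Q}RFQGFQ\xrightarrow{RFQGrFQ}RFQGRFQ\xrightarrow{RFqGRFQ}RFGRFQ\xrightarrow{R\epsilon RFQ}R^2FQ\xrightarrow{\mu FQ}RFQ.\]
I would first apply naturality of $q\colon Q\Rightarrow 1$ twice to slide the single $q$ all the way to the left, past the intervening $QGr$ and $Q\eta$, producing an adjacent pair $RFqQ\circ RF\delta$.  The comonad counit axiom $qQ\circ\delta = 1_Q$ then cancels these two factors.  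Next, naturality of $\epsilon\colon FG\Rightarrow 1$ applied to the morphism $rFQ$ slides $\epsilon$ past $r$, giving an adjacent pair $R\epsilon FQ\circ RF\eta Q$; the triangle identity $\epsilon F\circ F\eta = 1_F$ cancels this pair.  What remains is $\mu FQ\circ RrFQ$, which is the identity by the monad unit axiom $\mu\circ Rr = 1_R$.

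The second composite is handled by the entirely dual sequence of moves, applied to the analogous string for $QG\pi\circ\iota_{GR}$: slide the single $r$ rightwards by naturality of $r$, slide the single $\eta$ by naturality of $\eta$, cancel via the other comonad counit law $Qq\circ\delta = 1_Q$, cancel via the other triangle identity $G\epsilon\circ\eta G = 1_G$, and finish with the unit law $\mu\circ rR = 1_R$.  I do not anticipate any genuine obstacle: each step is a naturality square or a strict (co)monad/adjunction axiom, and no homotopical subtlety arises because $\iota$ and $\pi$ were built from strictly natural transformations at the point-set level.  The only real care required is bookkeeping which functor each whiskering occurs at, so that the naturality squares and axioms are invoked at the correct objects.
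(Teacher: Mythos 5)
Your proposal is correct and matches the paper's own argument: the paper's proof is the commutative diagram (3.4), which encodes exactly your sequence of moves — the two naturality squares for $q$ and the top-left triangle $qQ\circ\delta=1$, then a naturality square for $\epsilon$ and the triangle $\epsilon F\circ F\eta=1$, then the triangle $\mu\circ Rr=1$ — and then deduces the second identity by duality. You present the same chase as a sequential rewriting and spell out the dual moves explicitly, but the decomposition, the naturality squares invoked, and the (co)monad/adjunction identities used are identical.
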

\begin{proof} These statements are dual so a single diagram chase suffices
\begin{equation}\label{eq:simpid1}\xymatrix@R=40pt@C=80pt@!0{RFQ \ar[r]^-{RF\delta} \ar[dr]_1 & RFQ^2 \ar[r]^-{RFQ\eta} \ar[d]^{RFq} & RFQGFQ \ar[r]^-{RFQGr} \ar[d]^{RFq} & RFQGRFQ \ar[d]^{RFq} \\ &  RFQ \ar[dr]_1 \ar[r]^-{RF\eta} & RFGFQ \ar[r]^-{RFGr} \ar[d]^{R\epsilon} & RFGRFQ \ar[d]^-{R\epsilon} \\ &&RFQ \ar[r]^-{Rr} \ar[dr]_1 & R^2FQ \ar[d]^{\mu} \\ &&& RFQ}\end{equation} The squares commute by naturality of the vertical arrow; the triangles commute by a comonad, adjunction, and monad triangle identity, respectively.
\end{proof}

\begin{lem}\label{lem:simpid2} The map $\pi \colon RFQGR \to R$ is
associative in the sense that 
\[
\pi \cdot \pi_{FQGR}=\pi \cdot RFQG\pi.
\]
Dually, $\iota \colon Q \to QGRFQ$ is coassociative.  
\end{lem}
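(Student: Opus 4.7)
My plan is to prove associativity of $\pi$ directly by a diagram chase; the coassociativity of $\iota$ then follows by the dual argument, reversing all 2-cells and replacing $(R,\mu,\epsilon)$ by $(Q,\delta,\eta)$.

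Unpacking the definition $\pi = \mu \cdot R\epsilon R \cdot RFqGR$, both composites $\pi \cdot \pi_{FQGR}$ and $\pi \cdot RFQG\pi$ expand as vertical composites of six whiskered 2-cells with common source $RFQGRFQGR$ and common target $R$. The key observation is that both expansions carry out the same six elementary rewrites of the functor string $R\cdot F\cdot Q\cdot G\cdot R\cdot F\cdot Q\cdot G\cdot R$: two applications of $q$ at the two $Q$'s, two applications of $\epsilon$ at the two $FG$-subwords (each of which only becomes contractible after the intervening $Q$ has been removed), and two applications of $\mu$ at the two $R\cdot R$ pairs (each of which only becomes adjacent after the corresponding $\epsilon$). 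The only difference is the order in which the six operations are sequenced: $\pi \cdot \pi_{FQGR}$ processes the outer (leftmost) $RFQGR$-block first, while $\pi \cdot RFQG\pi$ processes the inner (rightmost) one first.

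To pass between the two composites I would perform a sequence of elementary swaps, each an instance of one of: (i) naturality of $\mu$, which allows $\mu$-whiskerings to commute past $q$- and $\epsilon$-whiskerings (e.g., rewriting $RFqGR \circ \mu_{FQGR}$ as $\mu_{FGR} \circ R^2FqGR$ and $R\epsilon R \circ \mu_{FGR}$ as $\mu R \circ R^2\epsilon R$); (ii) naturality of $\epsilon$ at $\mu$, used to rewrite $R\epsilon R \circ RFG\mu$ as $R\mu \circ R\epsilon R^2$; (iii) the interchange law in $\mathrm{Cat}$ applied to two copies of $q$ or two copies of $\epsilon$ whose supports are disjoint subwords of the string; and (iv) the monad associativity $\mu \cdot \mu R = \mu \cdot R\mu$, invoked once when both copies of $\mu$ compete for the central $R$ of the string. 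Stringing these elementary swaps together transforms the expansion of $\pi \cdot \pi_{FQGR}$ into that of $\pi \cdot RFQG\pi$. The main obstacle is bookkeeping: every individual swap is routine, but careful tracking of the support of each whiskered 2-cell through the evolving functor strings is required to organize the chase. Conceptually, however, nothing deeper is needed: every rearrangement is either an instance of interchange (for 2-cells acting at disjoint positions) or of the associativity of the monad multiplication.
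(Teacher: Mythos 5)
Your proposal is correct and is essentially the paper's argument: the paper also proves this by a single diagram chase (a $3\times 3$ grid of squares from $RFQGRFQGR$ to $R$), in which eight squares commute by naturality of the components $RFq$, $R\epsilon$, $\mu$ and one square---the bottom-right---commutes by associativity of the monad multiplication $\mu$. Organizing the chase as a sequence of pairwise swaps rather than assembling the full grid is just a different presentation of the same reasoning; the one thing to watch in your phrasing is that the two $\mu$-moves on the two sides are genuinely different (they merge different adjacent pairs among the three $R$'s), which is exactly why the monad associativity $\mu \cdot \mu R = \mu \cdot R\mu$ is indispensable rather than a formality.
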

\begin{proof} Again, a single diagram chase suffices:
\begin{equation}\label{eq:simpid2}\xymatrix{RFQGRFQGR \ar[r]^-{RFq} \ar[d]_{RFQGRFq} & RFGRFQGR \ar[r]^-{R\epsilon} \ar[d]_{RFGRFq} & R^2FQGR \ar[r]^-\mu \ar[d]_{R^2Fq} & RFQGR \ar[d]^{RFq} \\  RFQGRFGR \ar[d]_{RFQGR\epsilon} \ar[r]_-{RFq} & RFGRFGR \ar[r]_-{R\epsilon} \ar[d]_{RFGR\epsilon} & R^2FGR \ar[r]_-{\mu} \ar[d]_{R^2\epsilon} & RFGR \ar[d]^{R\epsilon} \\ RFQGR^2 \ar[d]_{RFQG\mu} \ar[r]_-{RFq} & RFGR^2 \ar[d]_{RFG\mu} \ar[r]_-{R\epsilon} & R^3 \ar[d]_{R\mu} \ar[r]_-{\mu} & R^2 \ar[d]^\mu \\ RFQGR \ar[r]_-{RFq} & RFGR \ar[r]_-{R\epsilon} & R^2 \ar[r]_-{\mu} & R}\end{equation} The bottom-right square expresses the associativity of the monad product $\mu$; the remaining squares commute by naturality of the horizontal map. 
\end{proof}

The relations presented in Lemmas \ref{lem:simpid1} and \ref{lem:simpid2} say precisely that $\iota$ and $\pi$ can be used to form cosimplicial resolutions. 

\begin{cor}\label{cor:resolutions} Given $F,G,Q,R$ as above, there is an coaugmented cosimplicial object in $\aC^{\aC}$
\begin{equation}\label{eq:cosimpresol}\xymatrix@C=60pt{ Q \ar[r]|-{\iota} & QGRFQ \ar@<2ex>[r]|-{\iota} \ar@<-2ex>[r]|-{QGRF\iota} & QGRFQGRFQ \ar[l]|-{QG\pi} \ar@<4ex>[r]|-{\iota} \ar[r]|-{QGRF\iota} \ar@<-4ex>[r]|-{QGRFQGRF\iota} & \ar@<2ex>[l]|-{QGRFQG\pi} \ar@<-2ex>[l]|-{QG\pi} \cdots}\end{equation}
and dually an augmented simplicial object in $\aD^{\aD}$
\[\xymatrix@C=60pt{ R & RFQGR \ar[l]|-{\pi} \ar[r]|-{RF\iota} & RFQGRFQGR \ar@<-2ex>[l]|-{\pi} \ar@<2ex>[l]|-{RFQG\pi} \ar@<2ex>[r]|-{RF\iota} \ar@<-2ex>[r]|-{RFQGRF\iota} & \ar@<4ex>[l]|-{RFQGRFQG\pi} \ar[l]|-{RFQG\pi} \ar@<-4ex>[l]|-{\pi} \cdots}\] Furthermore, we have augmented simplicial objects in $\aD^{\aC}$ and $\aC^{\aD}$ with forwards and backwards contracting homotopies
\[\xymatrix@C=60pt{ RFQ  \ar@<-1ex>[r]|-{RF\iota} & RFQGRFQ \ar@<-1ex>[l]|-{\pi} \ar@<1ex>[r]|-{RF\iota} \ar@<-3ex>[r]|-{RFQGRF\iota} & RFQGRFQGRFQ \ar@<-3ex>[l]|-{\pi} \ar@<1ex>[l]|-{RFQG\pi} \cdots}\] 
\[\xymatrix@C=60pt{ QGR \ar@<1ex>[r]|-{\iota} & QGRFQGR \ar@<3ex>[r]|-{\iota} \ar@<-1ex>[r]|-{QGRF\iota}  \ar@<1ex>[l]|-{QG\pi} & QGRFQGRFQGR \ar@<-1ex>[l]|-{QG\pi} \ar@<3ex>[l]|-{QGRFQG\pi}  \cdots }\]
\end{cor}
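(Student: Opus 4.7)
The plan is to recognize that Lemmas \ref{lem:simpid1} and \ref{lem:simpid2} encode precisely the coherence needed to assemble $(\iota, \pi)$ into the four (co)simplicial resolutions, in direct analogy with the classical construction of the monad and comonad resolutions from a strict adjunction.  The four resolutions are pairwise dual, so it suffices to construct the coaugmented cosimplicial object in $\aC^{\aC}$ and the augmented simplicial object in $\aD^{\aC}$ with forward contracting homotopy; the remaining two follow by formal duality.

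For the cosimplicial object (\ref{eq:cosimpresol}), I describe the term in codegree $n \ge 0$ as $(QGRF)^{n+1} Q = Q(GRFQ)^{n+1}$, with $Q$ in codegree $-1$ and coaugmentation given by $\iota$.  The $n+2$ cofaces $d^i \colon (QGRF)^{n+1} Q \to (QGRF)^{n+2} Q$ for $i = 0, \dots, n+1$ are whiskerings of $\iota$ by $(QGRF)^i$ on the left and $(GRFQ)^{n+1-i}$ on the right, so $\iota$ acts on the $i$-th of the $n+2$ copies of $Q$.  The $n+1$ codegeneracies $s^i \colon (QGRF)^{n+2} Q \to (QGRF)^{n+1} Q$ for $i = 0, \dots, n$ are whiskerings of $\pi$ by $(QGRF)^i QG$ on the left and $FQ (GRFQ)^{n-i}$ on the right, so $\pi$ acts on the $i$-th of the $n+1$ occurrences of the substring $RFQGR$.

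To verify the cosimplicial identities, I treat separately the cases in which the two inserted natural transformations act on disjoint or overlapping subexpressions.  In the disjoint case the identity is an instance of the middle-four interchange and so follows from naturality; this disposes of all $d^j d^i$ and $s^j s^i$ with $|i - j| > 1$ and all $s^j d^i$ with $i \notin \{j, j+1\}$.  In the overlapping cases, the identities reduce directly to the two lemmas: $d^{i+1} d^i = d^i d^i$ is the coassociativity of $\iota$ (Lemma \ref{lem:simpid2}); $s^i s^i = s^i s^{i+1}$ is the associativity of $\pi$ (Lemma \ref{lem:simpid2}); and $s^i d^i = 1 = s^i d^{i+1}$ are the two equations of Lemma \ref{lem:simpid1}, each suitably whiskered.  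The simplicial object in $\aD^{\aD}$ is strictly dual.

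For the augmented simplicial object in $\aD^{\aC}$, the term in degree $n \ge 0$ is $RF (QGRF)^{n+1} Q$, with augmentation object $RFQ$ in degree $-1$ connected by $\pi$ whiskered on the right by $FQ$.  Ordinary faces and degeneracies are built from whiskered $\pi$ and $\iota$ exactly as in the cosimplicial case, and their simplicial identities follow by the same analysis.  The forward contracting homotopy is provided by an extra degeneracy $s^{\mathrm{ex}}_n \colon RF(QGRF)^{n+1} Q \to RF(QGRF)^{n+2} Q$ for $n \ge -1$, obtained by whiskering $RF\iota$ by $(GRFQ)^{n+1}$ on the right.  The critical identity $d_0 \circ s^{\mathrm{ex}}_n = 1$ is exactly the first equation of Lemma \ref{lem:simpid1} whiskered by $(GRFQ)^{n+1}$, and the remaining identities involving $s^{\mathrm{ex}}_n$ follow from naturality.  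The augmented simplicial object in $\aC^{\aD}$ with backward contracting homotopy is dual, using the second equation of Lemma \ref{lem:simpid1}.  The principal obstacle is combinatorial bookkeeping rather than conceptual difficulty: one must track carefully which copy of $Q$ or which substring $RFQGR$ each face, coface, degeneracy, or codegeneracy acts on so that the correct naturality square or whiskered form of the relevant lemma applies.  No additional homotopical input beyond Lemmas \ref{lem:simpid1} and \ref{lem:simpid2} is needed.
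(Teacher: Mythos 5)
Your proof is correct and takes essentially the same approach as the paper: the paper treats the corollary as an immediate formal consequence of Lemmas \ref{lem:simpid1} and \ref{lem:simpid2} (the text preceding the corollary simply asserts that these relations ``say precisely'' that $\iota$ and $\pi$ form cosimplicial resolutions), and you have filled in the standard bookkeeping, correctly sorting the (co)simplicial identities into those that follow from naturality (disjoint insertions) and those that reduce to the two lemmas suitably whiskered (adjacent insertions).
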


If $R$ and $Q$ are fibrant and cofibrant replacements satisfying the hypotheses of Corollary \ref{cor:main} and $F \dashv G$ a Quillen adjunction, the functors comprising these (co)simplicial objects restrict to maps between $\aC^{\cf}$ and $\aD^{\cf}$, the full subcategories spanned by the fibrant-cofibrant objects. 

\begin{rem} While the diagrams of Corollary \ref{cor:resolutions} collectively define the images of the four hom-categories in $\Adj$, this data does not quite assemble into a strict 2-functor with domain $\Adj$. We'll have more to say about this point in section~\ref{sec:adjdata}.
\end{rem}

\subsection*{Algebras and coalgebras}

By Lemmas \ref{lem:simpid1} and \ref{lem:simpid2}, the triples $(GRFQ,  \iota, G\pi)$ and $(FQGR,  \pi, F\iota)$ nearly define a point-set level homotopical monad and comonad. More precisely, the zig-zags \[ 1 \xleftarrow{q} Q \xrightarrow{\iota} QGRFQ \xrightarrow{q} GRFQ \qquad FQGR \xrightarrow{r} RFQGR \xrightarrow{\pi} R \xleftarrow{r} 1 \] represent unit and counit maps that make $GRFQ$ into a monad on $\Ho\aC$ and $FQGR$ into a comonad on $\Ho\aD$. These observations suggest the following definitions.

\begin{defn} Write $T$ for the composite $GRFQ$. A \emph{homotopy} $T$-\emph{algebra} is an object $X \in \aC$ together with a map $h \colon TX \to X$ so that $Qh \cdot \iota = \id$ and $h \cdot G\pi = h \cdot Th$. A map $f \colon (X,h) \to (X',h')$ of homotopy $T$-algebras is a map $f \colon X \to X'$ commuting with the action maps.
\end{defn}

 These definitions are designed to produce a simplicial object in $\aD$.
\begin{equation}\label{eq:simpobj}\xymatrix@C=60pt{ RFQX \ar[r]|-{RF\iota} & RFQGRFQX \ar@<2ex>[l]|-{RFQh}\ar@<-2ex>[l]|-{\pi} \ar@<2ex>[r]|-{RF\iota} \ar@<-2ex>[r]|-{RFQGRF\iota} & RFQGRFQGRFQX \ar[l]|-{RFQG\pi} \ar@<4ex>[l]|-{RFQGRFQh} \ar@<-4ex>[l]|-{\pi} \cdots}\end{equation}
Applying the functor $G$, the simplicial object (\ref{eq:simpobj}) admits an augmentation to $X$. Further applying the functor $Q$, (\ref{eq:simpobj}) admits a backwards contracting homotopy. Algebra maps give rise to natural transformations between these diagrams. By standard arguments, the simplicial object $QT_\bullet X$ is simplicially homotopically equivalent to the constant simplicial object at $QX$, and consequently its geometric realization is weakly equivalent to $QX$, and hence to $X$. 

\begin{example}
Any object $X$ has an associated \emph{free homotopy} $T$-\emph{algebra} $(TX,G\pi)$. By naturality of $\pi$, for any map $f \colon X \to X'$, $Tf$ is a map of homotopy $T$-algebras $(TX,G\pi) \to (TX', G\pi)$. 
\end{example}

Let $\aC^T$ be the category of homotopy $T$-algebras. We regard it as a homotopical category with weak equivalences created by the forgetful functor $U \colon \aC^T \to \aC$. Write $T \colon \aC \to \aC^T$ to denote the free-algebra functor. Note that the composite of this functor with the forgetful functor is the original $T$. Because $T \colon \aC \to \aC$ is homotopical, $T \colon \aC \to \aC^T$ is homotopical. More generally, for any $Y \in \aD$, there is an associated $T$-algebra $(GRY, G\pi)$. This defines a homotopical functor  $GR \colon \aD \to \aC^T$ that lifts $GR \colon \aD \to \aC$ through the category of homotopy $T$-algebras.

Coalgebras for $FQGR$ can be defined dually.

\begin{rem} In the special case where $F$ is homotopical, $Q$ may be taken to be the identity deformation, and the triple $(GRF, \iota, G\pi)$ defines an actual monad on the point set level. In this case our notion of homotopy algebra coincides with the standard notion of algebra for the monad $GRF$.
\end{rem}

\section{Homotopy spectral sequences for monadic
completions}\label{sec:completions} 

For a commutative ring $R$, Bousfield and Kan define the
$R$-completion of a simplicial set $X$ as the $\Tot$ of the
cosimplicial resolution of the monad associated to the free $R$-module
functor (modulo the basepoint)~\cite{BK}.  This construction is
homotopically well-behaved because the monad in question preserves all
weak equivalences and the resulting cosimplicial object is Reedy
fibrant.  When generalizing to completions associated to other
adjunctions, we can handle the latter issue by using the homotopy
limit rather than the underived $\Tot$, but homotopical control on the
monad is a more serious problem.  This is precisely the problem
that our constructions solve.  We suppose in this section that we are
given  a deformable adjunction $F \colon \aC \rightleftarrows \aD \colon G$ in the sense of \ref{defn:defadj}.

\begin{defn}\label{def:dercomp} 
Writing $T=GRFQ$, the derived completion $\hat{X}$ of an object $X$ in
$\aC$ is the $\Tot$ of a Reedy fibrant replacement of the cosimplicial
object
\[\xymatrix@C=60pt{
QTX \ar@<2ex>[r]|-{\iota} \ar@<-2ex>[r]|-{QGRF\iota} & QT^2
X \ar[l]|-{QG\pi} \ar@<4ex>[r]|-{\iota} \ar[r]|-{QGRF\iota} \ar@<-4ex>[r]|-{QGRFQGRF\iota}
& QT^3 X \ar@<-2ex>[l]|-{QG\pi} \ar@<2ex>[l]|-{QGRFQG\pi} \cdots}\]
\end{defn}

Corollary \ref{cor:resolutions} implies that the derived
completion is well-defined and has  a natural zig-zag augmentation
\[
\xymatrix{
X & QX \ar[l] \ar[r] & \hat{X}
}
\]
induced by $q$ and $\iota$.  By construction, the derived
completion is functorial and preserves weak equivalences:

\begin{prop}\label{prop:dercompworks}
The derived completion $X \mapsto \hat{X}$ defines a functor
$\aC \to \aC$ such that if $X \to Y$ is a weak equivalence in $\aC$
then the induced map 
\[
\hat{X} \to \hat{Y}
\]
is a weak equivalence.
\end{prop}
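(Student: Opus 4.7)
My plan is to prove both claims simultaneously by showing that the point-set cosimplicial object
\[ c^\bullet(X) = \bigl(\,QGRFQX \rightrightarrows QGRFQGRFQX \,\cdots\,\bigr) \]
from Definition~\ref{def:dercomp} is functorial in $X$ and sends weak equivalences to levelwise weak equivalences. Once that is in hand, functoriality and homotopy invariance of $\hat{(-)}$ follow from standard facts about Reedy fibrant replacement and $\Tot$: Reedy fibrant replacement is a functorial construction that preserves levelwise weak equivalences, and $\Tot$ applied to a Reedy fibrant cosimplicial object is a model for the homotopy limit and therefore preserves levelwise weak equivalences between such.

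Functoriality of $c^\bullet(-)$ is immediate from the naturality of $Q$, $R$, $F$, $G$, $\iota$, and $\pi$, all of which is either built into the definitions of these (co)monads and adjunctions or was established in Lemmas~\ref{lem:simpid1} and~\ref{lem:simpid2}. The substance is to show that each levelwise functor $(QGRFQ)^n$ is homotopical. I would argue this by induction on $n$, using repeatedly the following three observations: $Q$ and $R$ are homotopical by hypothesis (they are homotopical (co)monads in the sense of Corollary~\ref{cor:main}); $F$ sends weak equivalences between objects of the form $QX$ to weak equivalences, because $(Q,q)$ is a left deformation for $F$; and $G$ sends weak equivalences between objects of the form $RY$ to weak equivalences, because $(R,r)$ is a right deformation for $G$. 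The crucial bookkeeping is that along the composite $QGRFQ \cdots QGRFQ$, every application of $F$ is to an object of the form $Q(\cdots)$ and every application of $G$ is to an object of the form $R(\cdots)$, so the deformability hypotheses of Definition~\ref{defn:defadj} apply at every stage.

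With levelwise homotopy invariance established, a weak equivalence $X \to Y$ induces a levelwise weak equivalence $c^\bullet(X) \to c^\bullet(Y)$ of cosimplicial objects in $\aC$. Choose a functorial Reedy fibrant replacement in the Reedy model structure on $\aC^{\mathbf{\Delta}}$; this exists because $\aC$ is a (cofibrantly generated) model category and preserves levelwise weak equivalences between cosimplicial objects. Applying $\Tot$ (understood via a simplicial or framed enrichment, as in the appendix) then yields a weak equivalence $\hat{X} \to \hat{Y}$.

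The only real obstacle is the verification that the deformability conditions suffice at every level of the cosimplicial object, and this is precisely the reason for the asymmetric $Q \cdots Q$ and $R \cdots R$ shape of the resolutions in Corollary~\ref{cor:resolutions}: the outer $Q$'s and $R$'s guarantee that subsequent $F$'s and $G$'s see only deformed inputs. No additional structure beyond what is recorded in Definition~\ref{defn:defadj} and Corollary~\ref{cor:main} is needed.
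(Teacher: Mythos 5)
Your proof is correct and supplies exactly the argument the paper leaves implicit: the paper asserts Proposition~\ref{prop:dercompworks} ``by construction'' without writing out a proof, and your verification that each level $Q(GRFQ)^{n}$ is homotopical---because every $F$ sees only inputs in the image of $Q$ and every $G$ only inputs in the image of $R$---together with the observation that functorial Reedy fibrant replacement and $\Tot$ preserve levelwise weak equivalences, is the intended reasoning. One small notational slip: you write $(QGRFQ)^n$ for the level-$n$ term, which would produce doubled $Q$'s in the middle; the actual levels of the cosimplicial object are $Q(GRFQ)^{n+1}$ (equivalently $(QGRF)^{n+1}Q$), as your own displayed formula for $c^\bullet(X)$ and the paper's Corollary~\ref{cor:spec} both show. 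This does not affect your argument, since the essential bookkeeping---that in the string $QGRFQGRF\cdots Q$ every $F$ is immediately preceded (reading right-to-left) by $Q$ and every $G$ by $R$---is precisely what you invoke, and it holds for $Q(GRFQ)^{n+1}$.
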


Control on this kind of completion is a central technical issue in the
Hess-Harper work on homotopy completions and Quillen
homology~\cite{HarperHess}.  In particular,
Proposition~\ref{prop:dercompworks} gives a direct approach to the
rigidification technology applied to produce the ``TQ-completion''
in~\cite[3.15]{HarperHess}.  This construction also encompasses
algebraic completions in the setting of ring spectra.  For instance,
given a commutative $S$-algebra $A$ and a commutative $A$-algebra
$B$, the derived completion of an $A$-module $M$ with respect to the
homotopical adjunction with left adjoint $B \sma_A -$ and right
adjoint the restriction recovers the derived completion of
Carlsson~\cite[3.1]{carlssonderived}.  Whereas Carlsson maintains
homotopical control by working in the EKMM category of spectra (where
all objects are fibrant), Proposition~\ref{prop:dercompworks} applies
in all of the modern categories of spectra.

For a pointed simplicial model category $\aC$, we obtain a Bousfield-Kan
homotopy spectral sequence computing the homotopy groups of $\hat{X}$,
which arises as the usual homotopy limit spectral sequence associated
to a cosimplicial object. See~\cite[\S 2.9]{Bousfield} for a
modern summary of this procedure.  

\begin{cor}\label{cor:spec}
For any object $W$ in $\aC$, there is a homotopy limit spectral
sequence associated to a Reedy fibrant replacement of the cosimplicial
object 
\[
\Map_{\aC}(W,QT^{\bullet} X)
\]
defined using the Dwyer-Kan mapping complex in $\aC$.
\end{cor}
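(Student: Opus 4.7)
The plan is to reduce the statement to the classical Bousfield--Kan homotopy spectral sequence of a Reedy fibrant cosimplicial simplicial set, as reviewed in \cite[\S 2.9]{Bousfield}. Once the cosimplicial object $Q(GRFQ)^\bullet X$ is in hand and its Reedy fibrant replacement has been mapped into by the Dwyer--Kan complex out of $W$, there is essentially nothing to do: the desired spectral sequence is the standard skeletal-filtration spectral sequence of the resulting $\Tot$.

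First I would invoke Corollary \ref{cor:resolutions} to produce the coaugmented cosimplicial object $Q(GRFQ)^\bullet X$ in $\aC$, and then take a Reedy fibrant replacement $(Q(GRFQ)^\bullet X)^{\mathrm{f}}$ as in Definition \ref{def:dercomp}. Next, replacing $W$ by a cofibrant model if necessary, I would apply the Dwyer--Kan mapping complex $\Map_{\aC}(W,-)$ levelwise. Because $\aC$ is a simplicial model category and $W$ is cofibrant, $\Map_{\aC}(W,-)$ carries (trivial) fibrations between fibrant objects to (trivial) fibrations of simplicial sets; applied levelwise to a Reedy fibrant cosimplicial object in $\aC$ this yields a Reedy fibrant cosimplicial simplicial set. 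This is the step that requires the most care, but it is a routine consequence of the SM7 axiom together with the inductive characterization of Reedy fibrations via matching objects.

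With a Reedy fibrant cosimplicial simplicial set $\Map_{\aC}(W,(Q(GRFQ)^\bullet X)^{\mathrm{f}})$ in hand, the classical Bousfield--Kan construction applies verbatim: its $\Tot$ computes the homotopy limit, and the skeletal filtration $\{\Tot_n\}$ gives a tower of fibrations whose associated exact couple is the desired spectral sequence. I would simply cite \cite[\S 2.9]{Bousfield} for this output, noting that Proposition \ref{prop:dercompworks} already guarantees the construction is independent of the choices made up to weak equivalence.

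The only genuine obstacle is the verification of Reedy fibrancy after applying $\Map_{\aC}(W,-)$; everything else is either a direct appeal to Corollary \ref{cor:resolutions} or a citation of the standard Bousfield--Kan machinery.
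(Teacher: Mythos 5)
Your proposal is correct and follows essentially the same route as the paper: produce the cosimplicial object via Corollary \ref{cor:resolutions}, arrange Reedy fibrancy, and then cite Bousfield [\S 2.9] for the standard homotopy limit spectral sequence of a Reedy fibrant cosimplicial simplicial set. The only cosmetic divergence is that you work with the simplicial enrichment (hence cofibrantly replace $W$ and Reedy fibrantly replace in $\aC$ before mapping out), whereas the statement is phrased via the Dwyer--Kan mapping complex, which is homotopy invariant in both variables and so absorbs these replacements; the resulting towers and spectral sequences agree.
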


More generally, we can apply our theory to extend the range of applicability of Bousfield's
comprehensive treatment of completion spectral sequences in the setting of resolution model structures~\cite{Bousfield}.  Bousfield works in the setting of a left proper simplicial model category $\aC$
equipped with a suitable class $\sG$ of injective models. Provided that all objects in $\aC$ admit $\sG$-resolutions, he constructs homotopy spectral sequences computing the $\sG$-completion.  We recall
his definitions from ~\cite[\S 3.1]{Bousfield} here.

Let $\sG$ be a class of group objects in $\Ho\aC$.  A map $i \colon A \to B$ in $\Ho\aC$ is $\sG$-\emph{monic} when $i^* \colon [B,\Omega^nG] \to [A,\Omega^nG]$ is onto for each $G \in \sG$ and $n \geq 0$.  An object
$Y \in \Ho\aC$ is called $\sG$-\emph{injective} when $i^* \colon [B,\Omega^nY] \to [A,\Omega^nY]$ is onto for each $\sG$-monic map $A \to B$ in $\Ho\aC$ and $n \geq 0$.  We say that $\Ho\aC$ \emph{has enough}
$\sG$-\emph{injectives} when each object in $\Ho\aC$ is the source of a $\sG$-monic map to an $\sG$-injective target, in which case $\sG$ is a class of \emph{injective models} in $\Ho\aC$.  In this setting Bousfield defines $\sG$-completions and $\sG$-homotopy spectral sequences.

In particular, Bousfield shows that given a
monad $T$ that preserves weak equivalences and has the further
properties that 
\begin{enumerate}
\item $TX$ is a group object in $ \Ho\aC$ and
\item $\Omega TX$ is $T$-injective in $\Ho\aC$, 
\end{enumerate}
then the completion with respect to the monad $T$---i.e., the class of
injective models specified by $\{TX\mid X \in \Ho\aC\}$---fits into his
framework \cite[\S 7.5]{Bousfield} and gives rise to a homotopy
spectral sequence \cite[\S 5.8]{Bousfield}.  He observes that the
monads associated to Quillen adjunctions $F \colon \aC \to \aD$ and $G
\colon \aD \to \aC$ where all objects in $\aC$ are cofibrant and all
objects in $\aD$ are fibrant satisfy these requirements.

Our work allows us to extend this to any deformable adjunction:

\begin{thm}\label{thm:holimspec}
Let $\aC$ and $\aD$ be cofibrantly generated model
 categories such that $\aC$ is left proper, pointed, and simplicial, and let
\[
F \colon \aC \rightleftarrows \aD \colon G
\] 
be a Quillen adjunction.  Writing $T=GRFQ$, suppose that
\begin{enumerate}
\item $T X$ is a group object in $\Ho\aC$ and
\item $\Omega TX$ is $T$-injective in $\Ho\aC$.
\end{enumerate}
Then $\sG = \{T X \mid X \in \Ho\aC\}$ forms an injective class
and for any object $W$ in $\aC$ there is a homotopy spectral sequence
associated to the filtration on 
\[
\Map_{\aC}(W, QT^\bullet X)
\]
computing the $W$-relative homotopy groups of the derived completion
$\hat{X}$.
\end{thm}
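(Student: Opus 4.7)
The plan is to reduce to Bousfield's framework by verifying the hypotheses that make $\sG = \{GRFQ X \mid X \in \Ho\aC\}$ an injective class in $\Ho\aC$, and then to identify the cosimplicial object of Definition~\ref{def:dercomp} with Bousfield's monadic cobar resolution up to levelwise weak equivalence. Because $Q$ and $R$ are homotopical by Corollary~\ref{cor:main}, the composite $T = GRFQ$ is a homotopical functor, and by the discussion following Corollary~\ref{cor:resolutions} the zig-zag $1 \xleftarrow{q} Q \xrightarrow{\iota} QGRFQ \xrightarrow{q} GRFQ$ induces a genuine monad structure on $T$ as an endofunctor of $\Ho\aC$. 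Hypotheses (i) and (ii) are then precisely Bousfield's group object and injectivity conditions from \cite[\S 7.5]{Bousfield}, so $T$ fits into his setup with $\sG$ as the associated class of injective models.

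Next I would verify that $\Ho\aC$ has enough $\sG$-injectives. For any $X$ the coaugmentation $X \to T X$ from the derived resolution is $\sG$-monic: precomposition with the monad unit gives a retraction of $[TX,\Omega^n TY] \to [X,\Omega^n TY]$ for every $Y$, since $TY$ is $T$-injective in the sense of Bousfield by hypothesis (ii) combined with the group object structure. Together with the homotopical monad structure on $T$, this shows that the $\sG$-injectives are exactly (up to retracts) the objects of the form $TY$, and that every object admits a $\sG$-monic map to an $\sG$-injective, so $\sG$ is an injective class in the sense of \cite[\S 3.1]{Bousfield}.

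With this in hand, the spectral sequence construction is formal. I would take a Reedy fibrant replacement of the cosimplicial object $QGRFQ^\bullet X$ from Corollary~\ref{cor:resolutions} in the simplicial model category $\aC$ (using left properness and the simplicial enrichment to guarantee that the mapping complex construction is homotopy invariant) and apply the pointed mapping complex $\Map_\aC(W,-)$ levelwise. This yields a cosimplicial pointed simplicial set whose $\Tot$-tower gives the Bousfield--Kan homotopy spectral sequence as in \cite[\S 2.9]{Bousfield}. Proposition~\ref{prop:dercompworks} and Corollary~\ref{cor:resolutions} together ensure that the abutment is the $W$-relative homotopy groups of $\hat X$, as the Reedy fibrant replacement realizes the derived completion of Definition~\ref{def:dercomp}.

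The main obstacle is the verification that enough $\sG$-injectives exist and that Bousfield's comparison of the $\sG$-completion with the $T$-monadic completion applies to our up-to-homotopy monad rather than a strict one. The point is that Bousfield's arguments in \cite[\S 5.8, \S 7.5]{Bousfield} only use the monad structure at the level of $\Ho\aC$, which is what Corollary~\ref{cor:resolutions} and the remarks at the end of Section~\ref{sec:htpyresol} provide; the simplicial identities needed to run the homotopy spectral sequence on the point-set cosimplicial object come from Lemmas~\ref{lem:simpid1} and~\ref{lem:simpid2}. Once this translation is made, the rest follows by direct appeal to Bousfield's theorems.
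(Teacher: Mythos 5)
Your proposal takes essentially the same route as the paper: reduce to Bousfield's framework by observing that hypotheses (i) and (ii) are exactly what is needed to run the argument of \cite[\S 7.5]{Bousfield}, then conclude via Bousfield's homotopy spectral sequence for the $\sG$-completion. The paper's proof is terser but more precise about the citations: it invokes \cite[7.4]{Bousfield} to show specifically that $QGRFQ^\bullet X$ is a \emph{weak $\sG$-resolution of $QX$} (levelwise $\sG$-injective and with $\sG$-equivalent coaugmentation), then \cite[6.5]{Bousfield} to get the spectral sequence for $\widehat{QX}$, and finally the weak equivalence $\widehat{QX}\to\widehat{X}$ to land on the stated abutment. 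A few details in your sketch are slightly off and worth flagging. First, to show that $X \to TX$ is $\sG$-monic you want a \emph{section} of the restriction map $[TX,\Omega^n TY]\to[X,\Omega^n TY]$ (to see surjectivity), not a retraction, and the lift comes from the monad multiplication together with the group-object structure used to commute $T$ past $\Omega^n$, not directly from hypothesis (ii) --- which asserts that $\Omega TX$ is $TX$-injective, a different statement from ``$TY$ is $T$-injective.'' Second, verifying that $\sG$ is an injective class with enough injectives is not by itself sufficient: the content of the theorem is that the \emph{particular} cosimplicial object $QGRFQ^\bullet X$ furnishes a weak resolution, which is where the extra degeneracies from Corollary~\ref{cor:resolutions} and the verbatim transfer of \cite[7.4]{Bousfield} are used. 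Your final paragraph gestures correctly at both issues, but the explicit reduction is to \cite[6.5]{Bousfield}'s $\sG$-completion spectral sequence rather than the generic cosimplicial homotopy limit spectral sequence of \cite[\S 2.9]{Bousfield}, since the former is what identifies the abutment.
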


\begin{proof}
Following the outline in~\cite[7.5]{Bousfield}, the argument
of~\cite[7.4]{Bousfield} applies verbatim to show that the
cosimplicial object $QT^\bullet X$ provides a weak resolution of
$QX$ with respect to the injective class $\{T X\}$.  That is, regarding $QX$ as a constant cosimplicial object, 
the map $QX \to QT^\bullet X$  is a $\sG$-equivalence and $QT^\bullet X$ is
levelwise $\sG$-injective.  This implies by~\cite[6.5]{Bousfield} that
there is a homotopy spectral sequence for $\widehat{QX}$; since
$\widehat{QX} \to \widehat{X}$ is a weak equivalence, the homotopy
spectral sequence computes the homotopy groups of $\widehat{X}$.   
\end{proof}

\begin{rem}
In the more restricted setting when we have a monad which preserves
weak equivalences between cofibrant objects, Definition~\ref{def:dercomp} recovers Radulescu-Banu's construction of the derived completion and the associated Bousfield-Kan spectral sequence of Corollary~\ref{cor:spec} recovers his construction of the homotopy spectral sequence of the completion~\cite{radulescu-banu-thesis}.  
\end{rem}

\section{Simplicially enriched fibrant-cofibrant replacement}\label{sec:simplicial}

A particular application of interest of the derived completion is to the adjunction $\Sigma^{\infty}\dashv \Omega^\infty$ connecting a suitable model category $\aC$ and its stabilization $\Stab(\aC)$. This will be the subject of section~\ref{sec:calculus} where we will discuss a generalization of certain technical lemmas of Goodwillie calculus enabled by our results of section~\ref{sec:htpyresol}.

Functors suitable for the analysis provided by Goodwillie calculus are simplicially enriched, homotopical, and frequently also \emph{reduced}, meaning they preserve the basepoint up to weak equivalence. An important preliminary to the results of section~\ref{sec:calculus} is that in a cofibrantly generated simplicial model category, the fibrant replacement monad and cofibrant replacement comonad can be made to satisfy these hypotheses. The second and third of these properties hold for any (co)fibrant replacement on account of the natural weak equivalence to or from the identity. In this section, we will show that the first property can also be made to hold in any simplicial model category. 

The proof given here applies more generally to produce $\aV$-enriched functorial factorizations in any $\aV$-model category for which tensoring with objects in $\aV$ defines a left Quillen functor, as is the case when all objects in $\aV$ are cofibrant \cite[\S 13]{riehlcathtpythy}. 

\begin{thm}\label{thm:enrichedreplacement} A cofibrantly generated simplicial model category admits a simplicially enriched fibrant replacement monad and a simplicially enriched cofibrant replacement comonad.
\end{thm}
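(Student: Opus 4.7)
The strategy is to enrich Garner's small object argument (Theorem~\ref{thm:garner}) so that the resulting functorial factorization is $\sSet$-enriched; the monad $R$ and comonad $Q$ are then extracted by the same restriction procedure described after that theorem. The key modification is to the step-one factorization (\ref{eq:stepone}). Rather than taking a discrete coproduct indexed by the \emph{set} $\mathrm{Sq}(i,f)$ of commuting squares, I would form the simplicially enriched coproduct $\coprod_{i\in\aI} i \otimes \uSq(i,f)$ in the arrow category, where $\uSq(i,f) = \Map_{\aC^2}(i,f)$ is the simplicial mapping space of squares from $i$ to $f$ and $\otimes$ is the pointwise tensor in $\aC^2$. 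The universal square $i \otimes \uSq(i,f) \to f$ (adjoint to the identity on $\uSq(i,f)$) provides the canonical map along which one pushes out to produce the enriched step-one left and right factors $C_1^\sSet f$ and $F_1^\sSet f$.

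Each arrow $\dom i \otimes \uSq(i,f) \to \cod i \otimes \uSq(i,f)$ equals the pushout-product $i \mathbin{\hat{\otimes}} (\emptyset \to \uSq(i,f))$, so by SM7 it is a cofibration, trivial when $i$ is trivial; hence the enriched left factor remains a (trivial) cofibration. The assignment $f \mapsto C_1^\sSet f, F_1^\sSet f$ is a simplicial functor on the arrow category, since it is built from four manifestly $\sSet$-enriched operations: the internal hom $\Map_{\aC^2}(i,-)$, the tensor $i \otimes - \colon \sSet \to \aC^2$, a coproduct over $\aI$, and a pushout in the simplicially cocomplete category $\aC^2$. The remainder of Garner's construction then carries over formally: the coequalizer (\ref{eq:steptwo}) and the successor and limit stages become enriched colimits (automatic in a simplicial category), and the free-monad-on-pointed-endofunctor argument of \cite{GarnerUSOA} goes through in the $\sSet$-enriched setting. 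Convergence under hypothesis $(*)$ or $(\dagger)$ is unaffected, since the size bounds on the $\uSq(i,f)$ reduce in each simplicial degree to bounds on the underlying hom-sets of $\aC$, controlled by presentability or boundedness. Applied to a set $\aJ$ that detects fibrant objects and restricted to slices over the terminal object, the result is the enriched fibrant replacement monad $R$; applied dually to $\aI$ and restricted to slices under the initial object, it is the enriched cofibrant replacement comonad $Q$.

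The main obstacle I anticipate is verifying that algebras for the enriched monad still have the ordinary right lifting property against $\aJ$, so that $RX$ is fibrant when $\aJ$ detects fibrant objects. By construction an algebra satisfies the \emph{enriched} right lifting property, namely a coherent filler for every $n$-simplex of $\uSq(i,-)$; restricting to $0$-simplices recovers the ordinary lifting property, so fibrant objects are correctly detected. The remaining work is bookkeeping to confirm that the enriched colimits used here satisfy the universal properties required by the iterated coequalizer construction underlying Garner's theorem, which is immediate from simplicial (co)completeness of $\aC$.
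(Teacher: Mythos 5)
Your proposal is correct and takes essentially the same approach as the paper: replace the set-indexed coproduct $\coprod_{\mathrm{Sq}(i,f)} i$ in step zero of Garner's small object argument with the tensor $\uSq(i,f) \otimes i$, observe that the remaining colimits in the construction are automatically enriched in a cotensored simplicial category, and recover the ordinary lifting property by restricting to vertices of $\uSq(i,f)$ (with the converse supplied by SM7). The only cosmetic difference is that the paper organizes the last verification around the algebraically-free-monad universal property and the $C$-coalgebra structure on the left factor, whereas you package the step-one cofibrancy as a pushout-product observation and defer the rest to Garner's machinery; these amount to the same thing.
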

\begin{proof}
A modified version of Garner's small object argument, described below, produces a functorial factorization in which the left factor is a simplicially enriched comonad and the right functor is a simplicially enriched monad. The SM7 axiom and the fact that all simplicial sets are cofibrant implies that the factorizations remain appropriate for the model structure. The result follows.

It remains to explain the modification of the small object argument. In a category cotensored over simplicial sets, ordinary colimits automatically satisfy an enriched universal property and consequently define enriched functors. Hence, the only part of Garner's small object argument that fails to be enriched is what might be called ``step zero'': the functor \[ f \mapsto \coprod_i \coprod_{\mathrm{Sq}(i,f)} i. \]  This problem is resolved if we replace the coproduct over the set $\mathrm{Sq}(i,f)$ with a tensor with the appropriate enriched hom-space. The category of arrows in a bicomplete simplicial category $\aC$ is simplicially enriched with hom-spaces defined by pullback \[ \xymatrix{ \uSq (i,f) \ar[d] \ar[r] \ar@{}[dr]|(.2){\lrcorner} & \uC(\dom i, \dom f) \ar[d]^{f_*} \\ \uC(\cod i, \cod f) \ar[r]_{i^*} & \uC(\dom i, \cod f)}\] where we have written $\uC$ for the hom-space. A point in the simplicial set $\uSq(i,f)$ is precisely a commutative square from $i$ to $f$ in $\aC$. Simplicial tensors and cotensors in $\aC^{\mathbf{2}}$ are defined pointwise.

The step-one functorial factorization in the modified small object argument has the form:
\begin{equation}\label{eq:newstepone}\xymatrix{ \cdot \ar[d]_-{\coprod\limits_{i \in \aI} \uSq(i,f) \otimes i} \ar[r] \ar@{}[dr]|(.8){\ulcorner}& \dom f \ar@{=}[r] \ar[d]^-{C_1f} & \dom f \ar[d]^f \\ \cdot \ar[r] & E_1f \ar[r]_{F_1f} & \cod f}\end{equation} Compare with \eqref{eq:stepone}. Because tensors and colimits define enriched functors,  the step-one functorial factorization is simplicially enriched. The remainder of the construction proceeds as described in section~\ref{sec:background}. Because the (ordinary) colimits involved in the free monad construction satisfy a simplicially enriched universal property in any cotensored simplicial category, this construction produces simplicially enriched functors and natural transformations.

It remains to argue, for example in the case where $\aI$ is the set of generating cofibrations, that the left factor in the factorization  is a cofibration and the right factor is a trivial fibration. The proof uses the algebraic interpretation of Garner's construction mentioned in section~\ref{sec:background}. The right factor $F$ is  the algebraically free monad on the functor $F_1$, meaning that an arrow in $\aC$ admits an algebra structure for the monad $F$ if and only if it admits an algebra structure for the pointed endofunctor $F_1$. Unraveling the definition, this is the case if and only if there is a lift in the right-hand square of \eqref{eq:newstepone}, which  is the case if and only if there is a lift in the outside rectangle. 

For each $i \in \aI$, restricting to the appropriate vertex of the space $\uSq(i,f)$, a lift in the outer rectangle produces a solution to any lifting problem of $i$ against $f$. Hence, if $f$ is an algebra for the monad $F$ then it is a trivial fibration. The converse holds because $\uSq(i,f) \otimes -$ preserves cofibrations as a consequence of the SM7 axiom.  Thus, the trivial fibrations are precisely those maps admitting the structure of $F$-algebras. The map $Ff$ is a free $F$-algebra and hence a trivial fibration.

Finally, the arrow $Cf$ is a coalgebra for the comonad $C$ and this coalgebra structure can be used to solve any lifting problem against an $F$-algebra, i.e., against a trivial fibration. It follows that $Cf$ is a cofibration. 
\end{proof}

\begin{cor}\label{cor:simpresol} If $F \colon \aC \rightleftarrows \aD \colon G$ is a simplicial Quillen adjunction between cofibrantly generated simplicial model categories, then there exist derived resolutions as in Corollary \ref{cor:resolutions} comprised of simplicial functors and simplicial natural transformations.
\end{cor}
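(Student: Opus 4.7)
The result is essentially an assembly of pieces already in place. First, I would apply Theorem \ref{thm:enrichedreplacement} to the generating cofibrations of $\aC$ and to a set of generating trivial cofibrations of $\aD$ (which detect fibrant objects) to obtain a simplicially enriched cofibrant replacement comonad $(Q,q,\delta)$ on $\aC$ and a simplicially enriched fibrant replacement monad $(R,r,\mu)$ on $\aD$ with the properties of Corollary \ref{cor:main}. Since $F$ is left Quillen and $G$ is right Quillen, $(Q,q)$ is a left deformation for $F$ and $(R,r)$ is a right deformation for $G$, so $F \dashv G$ has the structure of a deformable adjunction in the sense of Definition \ref{defn:defadj}. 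The constructions of Section \ref{sec:htpyresol} then yield all of the data displayed in Corollary \ref{cor:resolutions}.

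Next, I would verify that every functor and natural transformation appearing in these resolutions is simplicial. Because $F \dashv G$ is a simplicial adjunction, $F$ and $G$ are simplicial functors and the unit $\eta$ and counit $\epsilon$ are simplicial natural transformations; by construction $Q$ and $R$ are simplicial with $q$, $\delta$, $r$, $\mu$ simplicial. The defining composites
\[ \iota = QGr \cdot Q\eta \cdot \delta \colon Q \to QGRFQ \quad \text{and} \quad \pi = \mu \cdot R\epsilon \cdot RFq \colon RFQGR \to R \]
are then obtained from simplicial natural transformations by whiskering with simplicial functors and composing vertically. Each object in every degree of the four (co)simplicial diagrams of Corollary \ref{cor:resolutions} is an iterated composite of $F$, $G$, $Q$, $R$, and each structure map is obtained by whiskering $\iota$, $\pi$, $q$, $r$, $\eta$, or $\epsilon$ by such a composite.

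Closure of the 2-category of simplicially enriched categories, simplicial functors, and simplicial natural transformations under whiskering and vertical composition ensures that every such object and structure map is again simplicial, giving the claim. The only genuinely nontrivial step is the existence of a simplicial fibrant replacement monad and simplicial cofibrant replacement comonad, which has already been established in Theorem \ref{thm:enrichedreplacement}; thus I do not anticipate any further obstacle.
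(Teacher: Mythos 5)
Your proposal is correct and matches the approach the paper intends: the paper states this corollary without proof immediately after Theorem \ref{thm:enrichedreplacement}, treating it as an immediate consequence of that theorem together with Corollary \ref{cor:resolutions} and the fact that simplicial functors and natural transformations are closed under composition and whiskering. Your write-up simply makes explicit the routine bookkeeping the paper leaves to the reader.
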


By the remark given at the end of section \ref{sec:htpyresol}, we can restrict these diagrams to the simplicial subcategories spanned by the fibrant-cofibrant objects. These subcategories are important, for instance to the construction of the associated $\infty$-categories, because their  hom-spaces are ``homotopically correct''.

\section{Goodwillie calculus in model categories and the chain
rule}\label{sec:calculus}

Applications of homotopical resolutions arise when studying Goodwillie
calculus for functors between homotopical categories \cite{goodwillie}.  Kuhn has
observed that the foundational definitions and theorems of Goodwillie
calculus for functors between based simplicial sets and spectra can be
carried out in any pointed simplicial model category $\aC$ that is
left proper and cellular or proper and combinatorial~\cite{kuhn}.
We will be interested in homotopical functors $F \colon \aC \to \aD$.
We say that $F$ is \emph{finitary} if it preserves filtered homotopy colimits
and \emph{reduced} if $F(*) \htp *$.  Given a homotopical functor
$F \colon \aC \to \aD$ one can define a Goodwillie tower
\[
\cdots \to P_{n+1}F \to P_n F \to P_{n-1} F \to \cdots
\]
where $P_n F$ is $n$-\emph{excisive}, i.e., sends strongly homotopy
cocartesian $(n+1)$-cubes to homotopy cartesian cubes.  We say that a
functor $F$ is \emph{homogeneous} of degree $n$ if it is $n$-excisive and
$P_{n-1}F \htp *$.  Moreover, one can define derivatives  
\[
D_n F = \hofib(P_n F \to P_{n-1} F)
\]
that are homogeneous of degree $n$.

This program has been carried out in detail in the forthcoming thesis
of Peirera~\cite{Peirera}.  We begin by reviewing the basic setup.
The key technical preliminary required by this generalization is that
from a left proper cellular pointed simplicial model category one can
construct a stable model category $\Stab(\aC)$ which again has these
properties \cite[5.7,A.9]{hovey}. A pointed
simplicial model category $\aC$ is equipped with a suspension functor
defined by pushout \[ \xymatrix{
X \otimes \partial \Delta^1 \ar[d] \ar[r]  \ar@{}[dr]|(.8){\ulcorner}
&X \otimes \Delta^1 \ar[d] \\ {*} \ar[r] & \Sigma X}\] As a
consequence of the SM7 axiom, suspension is a left Quillen
endofunctor.  

Furthermore, there is a Quillen adjunction 
\begin{equation}\label{eq:newloopssus}
\Sigma^\infty \colon \aC \rightleftarrows \Stab(\aC) \colon \Omega^\infty
\end{equation}
and moreover the category $\Stab(\aC)$ is entitled to be referred to
as ``the'' stabilization in the sense that any suitable functor from
$\aC$ to a stable category factors through $\Sigma^\infty$.  The
precise universal property satisfied by $\Stab(\aC)$ is easiest to
state in the setting of  $\infty$-categories; e.g.,
see~\cite[1.4.5.5]{HA}.

\begin{rem}
Although we work with the hypothesis of a left proper cellular pointed
model category for compatibility with~\cite{hovey}, one could
alternatively work with other similar hypotheses, e.g., proper
combinatorial model categories.
\end{rem}

One of the deeper and more surprising facts about Goodwillie calculus
is that $D_n F$, as an $n$-homogeneous functor, is always in the image
of $\Omega^\infty$.  Specifically, we have the following analogue
of~\cite[2.7]{AroneChing}.

\begin{prop}[Peirera]\label{prop:perdeloop}
Let $F \colon \aC \to \aD$ be a reduced simplicial finitary
homotopical functor.  Then there is an equivalence
\[
D_n F(X) \htp \Omega^\infty R (\mathbb{D}_n F) (\Sigma^{\infty} Q
X),
\]
where $\mathbb{D}_n F \colon \Stab(\aC) \to \Stab(\aD)$ is an
$n$-homogeneous homotopical functor.
\end{prop}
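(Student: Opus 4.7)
The plan is to adapt the argument of Arone-Ching \cite[Theorem 2.7]{AroneChing}, which establishes the analogous statement for functors between pointed spaces and spectra, to the general model-categorical setting of Peirera's Goodwillie calculus. The essential inputs are Peirera's construction of the Goodwillie tower and its layers in a pointed simplicial model category, the simplicially enriched homotopical replacements of Theorem \ref{thm:enrichedreplacement} and Corollary \ref{cor:simpresol}, and the universal property of the stabilization $\Stab(\aC)$, which asserts that $n$-excisive functors to stable categories are determined by their symmetric multilinear cross-effects. A preliminary check is that $D_n F$ is $n$-homogeneous: it is $n$-excisive as a fiber of a map between $n$-excisive functors, and $P_{n-1} D_n F \htp \ast$ by the standard analysis of Goodwillie layers.

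The functor $\mathbb{D}_n F$ is then constructed from the $n$-th cross-effect $\mathrm{cr}_n F \colon \aC^n \to \aD$, which inherits from $F$ the property of being multi-reduced, simplicial, finitary, and homotopical. I would form the derived composite
\[
\Stab(\aC)^n \xrightarrow{(\Omega^\infty R)^n} \aC^n \xrightarrow{\mathrm{cr}_n F} \aD \xrightarrow{R\Sigma^\infty Q} \Stab(\aD),
\]
and apply Goodwillie multilinearization in each variable inside the stable model category $\Stab(\aD)$ to obtain a symmetric multilinear functor $L$, then define $\mathbb{D}_n F(Y) := L(Y,\ldots,Y)_{h\Sigma_n}$ using appropriately derived homotopy orbits. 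By construction $\mathbb{D}_n F$ is $n$-homogeneous and homotopical.

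To establish the asserted equivalence I would appeal to the classification of $n$-homogeneous functors by their symmetric multilinear cross-effects. Both sides are $n$-homogeneous in $X$, so it suffices to identify their associated multilinear functors. For the left-hand side, this is the multilinearization of $\mathrm{cr}_n F$, via Goodwillie's formula expressing $D_n F$ in terms of iterated cross-effects together with the $\Sigma_n$-symmetrization. For the right-hand side, the compatibility of $\Omega^\infty R$ with limits (and hence with multilinearization) together with the universal property of $\Stab$ identifies the associated multilinear functor with the same one constructed above, yielding the equivalence.

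The main obstacle is controlling the interaction between the derived $\Sigma^\infty \dashv \Omega^\infty$ adjunction and the cross-effect/multilinearization constructions on the point-set level. In particular, one must verify that for any $n$-homogeneous functor $H \colon \aC \to \aD$ arising in this argument, the derived unit $X \to \Omega^\infty R \Sigma^\infty Q X$ induces a weak equivalence after applying $H$, which is what permits us to trade $\aC$ for $\Stab(\aC)$ as the source. In the classical setting this follows essentially from the Freudenthal suspension theorem; in the present generality it is a consequence of the universal property characterizing $\Stab(\aC)$ among stable categories receiving a functor from $\aC$. Maintaining strict homotopical control throughout these manipulations is precisely where the enriched (co)monadic replacements of Theorem \ref{thm:enrichedreplacement} and the resolutions of Corollary \ref{cor:resolutions} are indispensable.
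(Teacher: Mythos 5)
The paper does not prove this proposition: it is attributed to Peirera and cited to the (then forthcoming) MIT thesis \cite{Peirera}, and the paper contains no argument for it. There is therefore no paper proof against which to compare your attempt, only the external source.

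On its own merits, your sketch follows the expected strategy, transplanting the argument of \cite[2.7]{AroneChing} into Peirera's model-categorical framework: check that $D_n F$ is $n$-homogeneous, build $\mathbb{D}_n F$ by forming the (derived, symmetrized) multilinearization of the $n$-th cross-effect in the stable category $\Stab(\aD)$, and then identify the two sides via the classification of $n$-homogeneous functors by their symmetric multilinear cross-effects. The dependency you highlight --- that the classification theorem and the factorization of homogeneous functors through $\Sigma^\infty$ must be established in the model-categorical setting --- is exactly what Peirera's thesis supplies, and your observation that the homotopical (co)monadic replacements $Q$, $R$ are what keep these point-set constructions under control is the right one.

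One imprecision worth flagging: you assert that what must be checked is that the derived unit $X \to \Omega^\infty R \Sigma^\infty Q X$ becomes an equivalence after applying any $n$-homogeneous $H$. That is not quite the statement you need, and it is not literally true in the form stated (the map $\Sigma^\infty X \to \Sigma^\infty\Omega^\infty\Sigma^\infty X$ is only a split monomorphism in general, so $H$ applied to the unit need not be an equivalence just because $H$ factors through $\Sigma^\infty$). The correct ingredient is Goodwillie's factorization theorem: any $n$-homogeneous functor landing in a stable target factors, up to natural equivalence, through $\Sigma^\infty$, i.e., $H \htp \mathbb{H}\circ \Sigma^\infty Q$ for some $n$-homogeneous $\mathbb{H}$ on $\Stab(\aC)$. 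From this one \emph{constructs} $\mathbb{D}_n F$ directly (as you do in your second paragraph) rather than deduce an equivalence on units. So the overall plan is sound, but the third paragraph's intermediate claim should be replaced with an appeal to this factorization result.
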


The fact that derivatives are stable objects tells us that the
relationship between $\aC$ and $\Stab(\aC)$ plays a basic role in the
structure of the Goodwillie calculus.  This insight has been used in
practical work on computing derivatives and Taylor towers.  Notably,
the celebrated work of Arone and Mahowald on the derivatives of the
identity functor from spaces to spaces depends on a cobar resolution
involving the $\Sigma^{\infty}\dashv \Omega^{\infty}$
adjunction~\cite{AroneMahowald}.  More recently, this technique has
been generalized to provide a key technical result Arone and Ching use
to deduce the chain rule for functors from spaces to
spaces from the chain rule for functors from
spectra to spectra.

In this setting, the source category of \eqref{eq:newloopssus},
simplicial sets, has all objects cofibrant in the standard model
structure and the target category, the EKMM category of
$S$-modules~\cite{EKMM}, has all objects fibrant.  Under these
hypotheses, the relevant cobar construction can be computed simply
from the ordinary cosimplicial resolution of the point set monad
$\Omega^\infty \Sigma^\infty$.  Of course in general, these
assumptions do not hold.  For instance, when $\aC$ is the category of
augmented commutative $R$-algebras and $\Stab(\aC)$ is the category of
$R$-modules~\cite{basterramandell}, not all objects in $\aC$ are
cofibrant.  But we use our homotopical resolutions to extend these
techniques to the more general setting of calculus in model
categories.  Specifically, we prove the following theorem, which is a
generalization of a key technical result of Arone and
Ching~\cite[16.1]{AroneChing} and~\cite[0.3]{AroneChing}.

\begin{thm}\label{thm:cobar}
Let $\aC$ be a pointed simplicial model category that is a left proper
and cellular.  Denote by $\Sigma^\infty$ and $\Omega^\infty$ the
adjoints connecting $\aC$ and $\Stab(\aC)$.  Let
$F \colon \aC \to \aC$ be a finitary pointed simplicial homotopical
functor and let $G \colon \aC \to \aC$ be a pointed
simplicial homotopical functor.  Then there are natural equivalences
\[
\xymatrix{
 \eta_n \colon P_n(FG) & \ar[l] \ar[r] P_n(FQG)
& \widetilde{\Tot}(P_n(FQ \Omega^\infty R (\Sigma^\infty
Q \Omega^\infty R)^{\bullet} \Sigma^\infty QG)) 
}
\]
and
\[
\xymatrix{
\epsilon_n\colon D_n(FG) & \ar[l] \ar[r] D_n(FQG)
& \widetilde{\Tot}(D_n(FQ \Omega^\infty R (\Sigma^\infty
Q \Omega^\infty R)^{\bullet} \Sigma^\infty QG)) 
}
\]
where $\widetilde{\Tot}$ denotes the $\Tot$ of a Reedy fibrant
replacement of the indicated cosimplicial objects, which are defined
analogously with definition~\ref{def:dercomp}.
(The result also holds for functors $F \colon \aC \to \Stab(\aC)$ and
$G \colon \Stab(\aC) \to \aC$.)
\end{thm}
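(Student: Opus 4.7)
The first map in each zig-zag is formal: since $F$ is homotopical and $q_{G(\cdot)}\colon QG \to G$ is a natural weak equivalence, $FQG \to FG$ is a natural weak equivalence of homotopical functors, and both $P_n$ and $D_n$ preserve such.

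For the second, substantive map, the plan is to realize the cosimplicial object as a derived comonadic cobar resolution after stabilization. Apply Corollary~\ref{cor:resolutions} to the deformable Quillen adjunction $\Sigma^\infty\dashv\Omega^\infty$, using the simplicially enriched (co)fibrant replacement (co)monads produced by Corollary~\ref{cor:simpresol}. This yields in $\aC^\aC$ the coaugmented cosimplicial object
\[
Q \xrightarrow{\iota} Q\Omega^\infty R\Sigma^\infty Q \rightrightarrows Q(\Omega^\infty R \Sigma^\infty Q)^2 \cdots
\]
Evaluating at $G(X)$ and whiskering with $F$ on the left produces the cosimplicial functor appearing in the theorem. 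The key observation is that when the outer $F$ is replaced by $\Sigma^\infty$, this cosimplicial object rewrites by associativity as the coaugmented cobar resolution
\[
\Sigma^\infty QG(X) \to (\Sigma^\infty Q \Omega^\infty R)^{\bullet+1}\,\Sigma^\infty QG(X)
\]
of the derived comonad $\Sigma^\infty Q \Omega^\infty R$ on $\Stab(\aC)$ applied to $\Sigma^\infty QG(X)$. The map $\pi$ of Lemma~\ref{lem:simpid1} supplies a derived counit which, after Reedy fibrant replacement, furnishes extra codegeneracies making the coaugmentation a cosimplicial homotopy equivalence; in particular $\widetilde{\Tot}$ of the $\Sigma^\infty$-stabilized cosimplicial object is equivalent to its augmentation $\Sigma^\infty QG(X)$.

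To transfer this stable equivalence into the claimed $D_n$-equivalence, I would invoke Proposition~\ref{prop:perdeloop} to write $D_n(F\Phi)(X) \simeq \Omega^\infty R\,\mathbb{D}_n(F\Phi)(\Sigma^\infty QX)$, and note that $\Omega^\infty R$ commutes with $\widetilde{\Tot}$ (since $\Omega^\infty$ is a right adjoint and $R$ preserves weak equivalences levelwise) while $\mathbb{D}_n$ depends only on the stable data. Combining these observations with the splitting established above yields the $D_n$ statement. The $P_n$ case then follows by induction on $n$ using the homotopy fiber sequence $D_n F \to P_n F \to P_{n-1}F$, the vanishing $P_0(F\Phi)\simeq *$ for reduced $F\Phi$, and the fact that Reedy fibrant $\widetilde{\Tot}$ preserves homotopy fiber sequences.

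The hardest part will be rigorously establishing the extra codegeneracies step. The derived counit $\pi$ satisfies its unit and associativity only via the homotopy coherences of Lemmas~\ref{lem:simpid1}--\ref{lem:simpid2}, and its application to a given level of the cosimplicial object requires that level first be replaced by a fibrant object. Careful bookkeeping with Reedy fibrant replacement, to ensure $\pi$ may legitimately be applied everywhere and that the resulting collection of maps satisfies the cosimplicial identities, is needed to upgrade the derived coherences into a genuine cosimplicial homotopy equivalence. Once this is settled, the remainder of the argument parallels the special case of~\cite[16.1]{AroneChing}, in which the cofibrancy and fibrancy of all objects in source and target categories makes these coherences automatic on the point-set level.
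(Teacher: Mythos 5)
Your overall strategy---use Corollaries~\ref{cor:resolutions} and \ref{cor:simpresol} to build the cobar cosimplicial object, observe the extra codegeneracy coming from $\pi$ when $F$ factors through $\Sigma^\infty$, invoke Proposition~\ref{prop:perdeloop}, and induct on $n$ using the fiber sequence $D_n \to P_n \to P_{n-1}$---is the right framework and matches the paper's citation of the induction in~\cite[16.1]{AroneChing}. The first zig-zag map and the final induction step from $D_n$ and $\eta_{n-1}$ to $\eta_n$ are fine, and your closing remarks about the coherence bookkeeping required to apply $\pi$ at the derived level (after Reedy fibrant replacement) correctly identify a point the paper elides by deferring to~\cite{AroneChing}.

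However, your central step---the independent proof of the $D_n$ case---has a genuine gap. You write that after applying Proposition~\ref{prop:perdeloop} levelwise to get $D_n(F\Psi_k G) \simeq \Omega^\infty R\,\mathbb{D}_n(F\Psi_k G)\,\Sigma^\infty Q$ and commuting $\Omega^\infty R$ past $\widetilde{\Tot}$, the splitting for $F = \Sigma^\infty$ ``combined with'' the fact that ``$\mathbb{D}_n$ depends only on the stable data'' yields $\epsilon_n$. But after the deloop, the cobar structure sits entirely \emph{inside} the argument of $\mathbb{D}_n$---the cosimplicial object is $\mathbb{D}_n(F\Psi_\bullet G)$---and $\mathbb{D}_n$ is not multiplicative, so none of the $\Sigma^\infty Q\Omega^\infty R$ pairs in $\Psi_k$ migrate outside where the counit $\pi$ could collapse them. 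Showing $\widetilde{\Tot}(\mathbb{D}_n(F\Psi_\bullet G))\simeq\mathbb{D}_n(FQG)$ is thus a statement of the same shape as the one you are trying to prove, and the deloop has not reduced it to the split case. What the paper (following Arone--Ching) actually does is different: the deloop is used to replace the \emph{outer} functor $F$ itself by one of the form $HR\Sigma^\infty Q$ in the critical step of the $\eta_n$ induction, so that ``composing out the extra $Q$'' literally exhibits the cosimplicial levels as $HR(\Sigma^\infty Q\Omega^\infty R)^{\bullet+1}\Sigma^\infty QG$ and the extra codegeneracy induced by $\pi\colon R\Sigma^\infty Q\Omega^\infty R\to R$ applies on the nose; the passage from $\eta_n$ to $\epsilon_n$ then requires Proposition~\ref{prop:chingtech} (the analogue of~\cite[3.1]{AroneChing}), which your sketch omits entirely. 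To repair your argument you would need to carry out the Arone--Ching induction genuinely, which involves both the Taylor tower of $F$ and Proposition~\ref{prop:chingtech}, rather than hoping the deloop alone transports the splitting.
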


In order to carry out the proof of this theorem, we need the
following technical proposition, which is a generalization
of~\cite[3.1]{AroneChing} and has the same proof.

\begin{prop}\label{prop:chingtech}
Let $\aC$ be a left proper cellular simplicial model category and let
$F$ and $G$ be pointed simplicial homotopical functors $\aC \to \aD$.
Assume that $F$ is finitary.  Then the natural map
\[
P_n(FG) \to P_n(F(P_n G))
\]
is an equivalence.
\end{prop}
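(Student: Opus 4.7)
The authors observe that this proposition has ``the same proof'' as \cite[3.1]{AroneChing}, so the strategy is to rerun that argument and verify that each ingredient generalizes to an arbitrary left proper cellular pointed simplicial model category $\aC$ (where Goodwillie calculus works by the results of \cite{kuhn} and \cite{Peirera}). I will use three basic facts about the $n$-excisive approximation: (a) for any pointed simplicial homotopical functor $H$, one has the filtered hocolim presentation $P_n H \simeq \hocolim_{k} T_n^k H$, where $T_n H(X) \simeq \holim_{\emptyset \neq U \subseteq \{0,\ldots,n\}} H(X \star U)$ is built from the punctured $(n{+}1)$-cube of iterated joins; (b) $P_n$ commutes with filtered homotopy colimits of functors; and (c) the canonical map $H \to T_n H$ is a $P_n$-equivalence, so $P_n T_n \simeq P_n$.

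\textbf{Execution.} The argument proceeds in three steps. First, because $F$ is finitary and homotopical, it commutes pointwise with the filtered homotopy colimit defining $P_n G$, yielding $F P_n G \simeq \hocolim_k F T_n^k G$. Applying $P_n$ and using (b) gives
\[
P_n(F P_n G) \;\simeq\; \hocolim_k P_n(F T_n^k G).
\]
Second, by induction on $k$, this reduces the statement to showing that for every pointed simplicial homotopical $G$ the natural map $P_n(F G) \to P_n(F T_n G)$ is a weak equivalence; applying this to $T_n^{k-1}G$ in place of $G$ gives each transition $P_n(F T_n^{k-1}G) \to P_n(F T_n^k G)$, and passing to the colimit completes the reduction. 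Third, one proves the one-step comparison by reproducing the cubical argument of \cite[3.1]{AroneChing}: the natural transformation $H \to T_n H$ is the map from the initial vertex of the strongly homotopy cocartesian cube $U \mapsto H(X \star U)$ to the homotopy limit over its punctured part, and one argues directly that precomposition of this cube with the finitary homotopical $F$ preserves the property of becoming an equivalence after $P_n$.

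\textbf{The main obstacle.} The real content sits in the third step. It amounts to showing that postcomposition with a finitary homotopical $F$ sends the $P_n$-equivalence $H \to T_n H$ to a $P_n$-equivalence $F H \to F T_n H$. Following \cite[3.1]{AroneChing}, this is carried out by a cubical connectivity analysis: the total homotopy fiber of $U \mapsto F G(X \star U)$ becomes sufficiently highly connected after iterating $T_n$, so it is killed by passage to the filtered hocolim computing $P_n$. The nontrivial point in the present generality is to verify that the requisite cubical calculus — in particular, the interaction between strongly homotopy cocartesian cubes of joins in $\aC$ and homotopy cartesian cubes after stabilization — behaves exactly as in the case of based spaces. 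This is where the SM7 axiom, left properness, cellularity, and the properties of $\Sigma^\infty \dashv \Omega^\infty$ and $\Stab(\aC)$ recorded earlier in Section~\ref{sec:calculus} are invoked; once these formal analogues are in place, the Arone--Ching cubical argument goes through verbatim.
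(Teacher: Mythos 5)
Your reduction is essentially the intended one: the paper offers no argument beyond the assertion that the proof of \cite[3.1]{AroneChing} carries over, and your chain -- finitariness of $F$ pulls it past the hocolim defining $P_n G$, commutation of $P_n$ with filtered hocolims reduces the claim to a single $T_n$-step, and a cubical analysis finishes -- is a faithful reconstruction of that cited proof. One point of caution about the final step: phrasing the cubical estimate in terms of ``sufficiently high connectedness'' is not literally meaningful in an arbitrary left proper cellular pointed simplicial model category, where there is no intrinsic notion of connectivity; the right statement is the connectivity-free reformulation of Goodwillie's cube estimates established in the Kuhn/Pereira framework cited earlier in section~\ref{sec:calculus}, and this is precisely the foundational input the paper silently relies on when it says ``has the same proof.'' (Also, in your second step you wrote ``precomposition'' of the cube with $F$ where you mean postcomposition, i.e., applying $F$ to the cube $U \mapsto G(X \star U)$.)
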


We can now give the proof of the theorem.

\begin{proof}[Proof of Theorem~\ref{thm:cobar}]
The unit zig-zag 
\[
\xymatrix{
X & \ar[l]_q QX \ar[r]^-\iota & Q \Omega^\infty R \Sigma^\infty QX
}
\] 
gives rise to the rise to the zig-zags $\eta_n$ and $\epsilon_n$.
The induction in the proof of~\cite[16.1]{AroneChing} now goes
through essentially without change to establish the result for
$\eta_n$; none of the arguments depend on the details 
of the cobar construction except for the analysis of the case in which
$F = H \Sigma^{\infty}$.  In our context, we can instead assume that
$F = H R \Sigma^{\infty} Q$ by Proposition~\ref{prop:perdeloop}.
In this case, composing out the extra $Q$, the cosimplicial object
becomes
\[
\widetilde{\Tot}(P_n(H R \Sigma^{\infty} Q \Omega^\infty R
(\Sigma^\infty Q \Omega^\infty R)^{\bullet} \Sigma^\infty QG))
\]
and this has an extra codegeneracy induced by the ``counit''
composite
\[
\xymatrix{ R\Sigma^\infty Q\Omega^\infty R \ar[r]^-\pi & R}
\]
The case of $\epsilon_n$ now follows just as
in~\cite[16.1]{AroneChing}, using Proposition~\ref{prop:chingtech} in 
place of~\cite[3.1]{AroneChing}.
\end{proof}

We expect that Theorem~\ref{thm:cobar} should form the basis of a
proof of the Arone-Ching chain rule in the general setting of
Goodwillie calculus for an arbitrary model category.  However, we
note that there remain substantial technical issues involved in
closely mimicking their approach associated to the facts that:
\begin{enumerate}
\item Our functor $\Sigma^{\infty} Q \Omega^{\infty} R$ defines a
comonad on the homotopy category but not on the point set level
because the counit is only a zig-zag, and 
\item We do not know how to construct a symmetric monoidal fibrant
replacement functor on $\Stab(\aC)$.
\end{enumerate}
See the ``Technical remarks'' section of the preface
to~\cite{AroneChing} for further elaboration on the role these
hypotheses play in their work.

We also expect that Theorem~\ref{thm:cobar} should be useful in
proving the conjecture that the derivatives of the identity functor on
the category of $\aO$-algebras for a reasonable operad $\aO$
recovers the operad.

\section{2-categorical structure of homotopical resolutions}\label{sec:adjdata}

We conclude with a few observations about the 2-categorical structure
of the resolutions of Corollary \ref{cor:resolutions}.  We do not have
a specific application in mind.  However, certain aspects of homotopy
theory, sometimes in the guise of $\infty$-category theory, can be
productively guided by 2-category theory. One instance of this has to
do with the free \emph{homotopy coherent adjunction}, inspired by the
free 2-category $\Adj$ containing an adjunction, which satisfies an
analogous universal property having to do with adjunctions between
$\infty$-categories \cite{riehlverity}.   We believe that
interpreting homotopical resolutions in this context provides a
conceptual explanation for their efficacy.

The 2-category $\Adj$ has two objects $0,1$ and hom-categories \[ \Adj(0,0) = \Del_+ \quad \Adj(0,1) = \Del_{-\infty} \quad \Adj(1,0) = \Del_{\infty} \quad \Adj(1,1) = \Del_+^{\op}\] Here $\Del_+$ is the category of finite ordinals and order preserving maps and $\Del_{\infty}$ and its opposite $\Del_{-\infty}$ are the subcategories of non-empty ordinals and maps that preserve the top and bottom elements respectively. The composition of 1-cells with domain and codomain 0 is by ordinal sum \[ \Adj(0,0) \times \Adj(0,0) = \Del_+ \times \Del_+ \xrightarrow{\oplus} \Del_+ = \Adj(0,0).\] This composition extends to 2-cells by ``horizontal juxtaposition'' of order preserving maps.  Composition for endo-1- and 2-cells of 1 is defined analogously, and these composition operations restrict to the subcategories $\Del_{-\infty}$ and $\Del_\infty$ defining the other hom-categories. See \cite{schanuelstreet} or \cite{riehlverity} for more details.

Suppose given a (simplicial) Quillen adjunction $F \colon \aC \rightleftarrows \aD \colon G$ between cofibrantly generated (simplicial) model categories. By Corollaries \ref{cor:resolutions} and \ref{cor:simpresol} there exist diagrams \eqref{eq:cosimpresol} of (simplicial) functors and (simplicial) natural transformations between the (fibrant simplicial) categories $\aC^{\cf}$ and $\aD^{\cf}$. This data assembles into a morphism $H \colon \Adj \to \hCat$, where the target is the 2-category of (simplicial) homotopical categories, (simplicial) homotopical functors, and (simplicial) natural weak equivalences---but this map is not a strict 2-functor. Instead it is a coherent mix of a lax and colax functor in a way we shall now describe.

The two-object 2-category $\Adj$ has two full sub 2-categories, which we might denote $B\Del_+$ and $B\Del_+^{\op}$; here the ``$B$'' denotes a ``delooping'' of the monoidal category appearing as its hom-category. The map $H$ restricts to a colax functor $B\Del_+$ and a lax functor $B\Del_+^{\op}$ meaning there are diagrams
\[ \xymatrix{ B\Del_+ \times B\Del_+ \ar[d]_{QGRFQ^\bullet \times QGRFQ^\bullet} \ar[r]^-\oplus \ar@{}[dr]|{\delta\Downarrow} & B\Del_+ \ar[d]^{QGRFQ^\bullet} & & \mathbf{1} \ar[r]^-\id \ar[dr]_-\id^{\labelstyle q\Downarrow} & B\Del_+ \ar[d]^{QGRFQ^\bullet}  \\ \hCat \times \hCat \ar[r]_-\times & \hCat & & & \hCat}\] 
\[ \xymatrix{ B\Del_+^{\op} \times B\Del_+^{\op} \ar[d]_{RFQGR_\bullet \times RFQGR_\bullet} \ar[r]^-\oplus \ar@{}[dr]|{\mu\Uparrow} & B\Del_+^{\op} \ar[d]^{RFQGR_\bullet} & & \mathbf{1} \ar[r]^-\id \ar[dr]_-\id^{\labelstyle r\Uparrow} & B\Del_+^{\op} \ar[d]^{RFQGR_\bullet}  \\ \hCat \times \hCat \ar[r]_-\times & \hCat & & & \hCat}\] 
whose 2-cells satisfy associativity and unit coherence conditions. Here the monoidal product $\oplus$ on $\Del_+$ defines the composition functor for the 2-category $B\Del_+$.  When $H$ is extended back to the entirety of $\Adj$, there exist six similar ``composition'' diagrams in which the direction of the 2-cell is determined by whether the composition takes place at the object $0$ or at the object $1$. The displayed 2-cells again satisfy the usual (op)lax coherence conditions, up to the fact that some point in the wrong direction. 

One way to interpret this data is that if $H$ is composed with the
2-functor $\Ho \colon \hCat \to \Cat$ that sends a homotopical
category to its homotopy category, the result is a pseudofunctor
$\Adj \to \Cat$ which classifies the total derived adjunction of the
Quillen adjunction $F \dashv G$.

\begin{appendix}

\section{Producing simplicial enrichments}\label{appendix:simp}

A generic bicomplete category need not be simplicially enriched,
tensored, and cotensored; a model category with these properties need
not be a simplicial model category. However, under certain set
theoretical hypotheses a model category may be replaced by a Quillen
equivalent simplicial model category.  Moreover, this replacement is
functorial in the sense that Quillen adjunctions can be lifted to
simplicial Quillen adjunctions.  We review the results of
Dugger~\cite{dugger} and Rezk-Schwede-Shipley~\cite{RSS} that describe
this replacement.

\begin{thm}[Dugger]\label{thm:dugger} Let $\aC$ be a left proper combinatorial model category. Then there exists a simplicial model structure on the category $s\aC$ of simplicial objects in $\aC$ whose cofibrations are Reedy cofibrations and whose fibrant objects are Reedy fibrant simplicial objects whose structure maps are  weak equivalences. Furthermore, the adjunction $c \colon \aC \rightleftarrows s\aC \colon \mathrm{ev}_0$ is a Quillen equivalence.
\end{thm}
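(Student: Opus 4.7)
The plan is to build the simplicial model structure on $s\aC$ in two stages, starting from a well-known structure and then performing a left Bousfield localization, and only at the end verifying that the whole thing interacts correctly with the constant-diagram/evaluation adjunction.

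First I would equip $s\aC$ with the Reedy model structure associated to $\Del^{\op}$ and the original model structure on $\aC$. This exists without any combinatoriality assumption and has the cofibrations and weak equivalences stated: cofibrations are Reedy cofibrations and fibrations are Reedy fibrations. The next step is a left Bousfield localization at the set $S$ of maps of the form $c(\Delta^n)\otimes_{\text{levelwise}}A \to c(\Delta^0)\otimes_{\text{levelwise}}A$, ranging over a set of domains and codomains of the generating (trivial) cofibrations of $\aC$; the $S$-local objects are precisely the Reedy fibrant simplicial objects all of whose structure maps are weak equivalences. This is where the hypothesis is used: by Hirschhorn's localization machinery, a left proper combinatorial (equivalently, left proper cellular) model structure admits the left Bousfield localization at any set $S$, producing a new model structure with the same cofibrations and with fibrant objects the $S$-local Reedy fibrant objects. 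Call this the localized Reedy model structure.

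Next I would install the simplicial enrichment. The tensor is defined by $(X\otimes K)_n = \coprod_{K_n} X_n$ with face and degeneracy maps induced in the obvious way, and the cotensor and mapping complex are determined formally by adjointness and end/coend formulas. The substantive check is the pushout-product axiom SM7: given a Reedy cofibration $X\to Y$ in $s\aC$ and a cofibration $K\to L$ of simplicial sets, the pushout-product must be a Reedy cofibration in the localized structure, and must be a weak equivalence if either factor is. The cofibration part is a standard Reedy latching-object calculation combined with the fact that $-\otimes K$ preserves Reedy cofibrations; the trickier half is trivial cofibrations, which I expect to be the main obstacle and where left properness and the specific form of $S$ are important, because localized trivial cofibrations are not intrinsically describable and one must reduce to showing that tensoring with simplicial sets preserves $S$-local weak equivalences between cofibrant objects. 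Here I would use Dugger's trick of checking on the generators of $S$ and using the cofibrancy of all simplicial sets together with the fact that all objects are cofibrant in the Reedy structure when appropriate, or alternatively cite Hirschhorn's criterion for when a localized model category becomes simplicial.

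Finally I would verify that the adjunction $c\dashv \mathrm{ev}_0$ is a Quillen equivalence. That $c$ is left Quillen amounts to the observation that $c$ sends cofibrations to Reedy cofibrations (latching maps are either the original map or identities) and sends weak equivalences to levelwise, hence $S$-local, weak equivalences. For the equivalence part, any $S$-local Reedy fibrant $Y_\bullet$ has all structure maps weak equivalences, so the unit $X \to \mathrm{ev}_0 R c(X)$ and the counit $c\,\mathrm{ev}_0(Y_\bullet) \to Y_\bullet$ (after fibrant replacement on the left and cofibrant replacement on the right, as appropriate) are weak equivalences because the latter compares a constant simplicial object at $Y_0$ with $Y_\bullet$, whose structure maps are equivalences. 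This gives the Quillen equivalence and completes the proof.
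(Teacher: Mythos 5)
Your proposal follows the same route as the paper's sketch: equip $s\aC$ with the coproduct-indexed tensoring, take the Reedy model structure, and left Bousfield localize to force the structure maps of fibrant objects to be weak equivalences, then verify SM7 and the Quillen equivalence with $c \dashv \mathrm{ev}_0$. The paper simply cites Dugger and Rezk--Schwede--Shipley for the localization and the simplicial structure, whereas you spell out the generating set $S$, the two halves of SM7 (correctly flagging that the Reedy structure alone is not simplicial with this tensoring, so the trivial-cofibration half genuinely needs the localization), and the unit/counit argument; this is a more detailed version of the same argument, not a different one.
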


By a result of Joyal, a model structure is completely determined by the cofibrations and the fibrant objects, supposing it exists. The fibrant objects are called \emph{simplicial resolutions} in \cite{dugger} and \emph{homotopically constant} simplicial objects in \cite{RSS}.

\begin{proof}[Proof sketch]
For any category $\aC$ admitting certain limits and colimits, $s\aC$ is tensored, cotensored, and enriched in a standard way: the tensor of a simplicial set $K$ with a simplicial object $X$ is defined by $(K \otimes X)_n = \coprod_{K_n} X_n$. The cotensor and enrichment are determined by adjunction. Adopting the terminology of \cite{RSS}, the simplicial category $s\aC$ admits a \emph{canonical simplicial model structure} defined to be a particular left Bousfield localization of the Reedy model structure on $s\aC$; in particular, its cofibrations are the Reedy cofibrations. Weak equivalences are those maps of simplicial objects that induce a weak equivalence on (corrected) homotopy colimits. Note, in particular, that pointwise weak equivalences have this property by tautology.
\end{proof}

Work of Rezk, Schwede, and Shipley extends these results to Quillen adjunctions. This story is somewhat subtle. Dugger's result uses the machinery of left Bousfield localization for which no general characterization of the resulting fibrations exist. This makes it difficult to detect right Quillen functors. Here, a general model categorical result will prove useful. 

\begin{lem}[{\cite[A.2]{dugger}}]\label{lem:RQuillen} An adjoint pair of functors between model categories is a Quillen adjunction if and only if the right adjoint preserves trivial fibrations and fibrations between fibrant objects.
\end{lem}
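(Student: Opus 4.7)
The ``only if'' direction is immediate from the definition of a Quillen adjunction, since a right Quillen functor preserves all fibrations and all trivial fibrations. For the converse, assuming $G$ preserves trivial fibrations and fibrations between fibrant objects, it suffices to show that $F$ preserves cofibrations and trivial cofibrations. Both claims proceed by transposing lifting problems across the adjunction.

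For cofibrations the argument is a one-line adjoint exercise: given a cofibration $i$ in $\aC$ and a trivial fibration $p$ in $\aD$, a lifting problem for $Fi$ against $p$ transposes to one for $i$ against $Gp$. Since the hypothesis forces $Gp$ to be a trivial fibration in $\aC$, the lift exists because $i$ is a cofibration, and hence $Fi$ is a cofibration. For trivial cofibrations, given a trivial cofibration $i \colon A \to B$ and a fibration $p \colon X \to Y$ in $\aD$, I would reduce any lifting problem for $i$ against $Gp$ to one against a fibration between fibrant objects. Concretely, first factor $Y \to *$ as a trivial cofibration $j \colon Y \to \tilde Y$ followed by the terminal fibration with $\tilde Y$ fibrant, and then factor the composite $jp \colon X \to \tilde Y$ as a trivial cofibration $k \colon X \to \tilde X$ followed by a fibration $q \colon \tilde X \to \tilde Y$; since $\tilde Y$ is fibrant, so is $\tilde X$, and hence by hypothesis $Gq$ is a fibration in $\aC$. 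Composing a given lifting problem for $i$ against $Gp$ with $Gk$ and $Gj$ produces a lifting problem for $i$ against $Gq$ which is solvable because $i$ is a trivial cofibration and $Gq$ is a fibration.

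The technical heart of the proof lies in descending the lift found in the $Gq$-problem back to a lift in the original $Gp$-problem, and this is the main obstacle I anticipate. The plan is to form the pullback $P = \tilde X \times_{\tilde Y} Y$ in $\aD$; since $G$ preserves limits, $GP$ is the corresponding pullback in $\aC$, and the projection $GP \to GY$ is a fibration (being the pullback of $Gq$ along $Gj$) through which $Gp$ factors via the universal comparison map $GX \to GP$ induced by $Gk$ and $Gp$. The lift constructed against $Gq$, paired with the given map $B \to GY$, determines via the pullback universal property a map $B \to GP$ whose restriction along $i$ agrees with the given map $A \to GX \to GP$. The final step---producing a genuine lift $B \to GX$---should be obtained by transposing back to $\aD$ and exploiting that $p$ itself is a fibration in $\aD$ to solve an auxiliary lifting problem whose adjoint encodes the required compatibility. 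Carrying this last correction through rigorously, by carefully managing the interplay between the factorization in $\aD$, the pullback in $\aC$, and the adjoint transposition, is where I expect the real work of the proof to lie.
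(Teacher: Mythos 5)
The lemma is cited to Dugger~\cite[A.2]{dugger} without proof in the paper, so the relevant comparison is to the standard argument. Your setup is right: the ``only if'' direction and the preservation of cofibrations by $F$ (by transposing against trivial fibrations) are both correct, and the fibrant replacements $j \colon Y \to \tilde Y$, $k \colon X \to \tilde X$ with $q \colon \tilde X \to \tilde Y$ a fibration between fibrant objects are exactly the objects one wants. The problem is the final descent, which you correctly identify as ``where the real work lies''---and the pullback strategy does not close the gap. The comparison map $u \colon X \to P = \tilde X \times_{\tilde Y} Y$ has no useful lifting or weak-equivalence property in a general model category: the projection $P \to \tilde X$ is the pullback of the trivial cofibration $j$ along a fibration, which is a weak equivalence only under a right-properness hypothesis that is not assumed (and is not available in Dugger's left proper combinatorial setting). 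Without knowing $u$ is a weak equivalence, factoring $u$ as a trivial cofibration followed by a fibration $w \colon Z \to P$ does not help: $Gw$ need not be a fibration (since $P$, $Z$ are not fibrant), and transposing to $\aD$ leads to a lifting problem of $Fi$ against $w$ that would require $Fi$ to already be a trivial cofibration---circular. Your suggestion to ``exploit that $p$ is a fibration'' likewise leads only to squares with no usable bottom map or with right verticals that $G$ does not control.

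The standard proof takes a different and more economical route that does not pass through a pullback at all. Having shown $F$ preserves cofibrations, one shows $Fi$ is a weak equivalence (hence a trivial cofibration) whenever $i$ is a trivial cofibration. The key sub-claim is: any cofibration $g \colon C \to D$ in $\aD$ having the left lifting property against all fibrations between fibrant objects is a weak equivalence. One proves this by taking a fibrant replacement $d \colon D \to D'$, factoring $C \to D'$ as a trivial cofibration $c \colon C \to C'$ followed by a fibration $C' \to D'$ (between fibrant objects), and using the hypothesis to lift to obtain $\ell \colon D \to C'$ with $\ell g = c$ and $(C' \to D')\ell = d$; since both composites are weak equivalences, $g$ is a weak equivalence by the two-out-of-six property of weak equivalences in a model category. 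Applying this to $g = Fi$ (which has the required lifting property because $i$ lifts against $Gp$ for each fibration $p$ between fibrant objects, $Gp$ being a fibration by hypothesis) finishes the argument. The appeal to two-out-of-six (equivalently, the fact that a map inverted in $\Ho\aD$ is a weak equivalence) is the step your pullback attempt was implicitly trying to replace, and there does not appear to be an elementary substitute without some properness assumption.
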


Rezk-Schwede-Shipley observe that the fibrations between fibrant objects in the canonical model structure are precisely the Reedy fibrations \cite[3.9]{RSS}. This enables the proof of the result we want.

\begin{prop}[{\cite[6.1]{RSS}}]\label{prop:RSS} Suppose $F \colon \aC \rightleftarrows \aD \colon G$ is a Quillen adjunction between left proper combinatorial model categories. Then $\overline{F} \colon s\aC \rightleftarrows s\aD \colon \overline{G}$ is a simplicially enriched Quillen adjunction between the canonical simplicial model categories.
\end{prop}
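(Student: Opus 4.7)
The plan is to extend the Quillen adjunction levelwise and then verify the Quillen condition via Lemma~\ref{lem:RQuillen}, the simplicial enrichment being essentially automatic.

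Applying $F$ and $G$ in each simplicial degree defines functors between $s\aC$ and $s\aD$, and the levelwise unit and counit assemble into natural transformations that make $F \dashv G$ an adjunction between the categories of simplicial objects. For the simplicial enrichment, recall from the proof of Theorem~\ref{thm:dugger} that the tensor satisfies $(K \otimes X)_n = \coprod_{K_n} X_n$. Since $F$ is a levelwise left adjoint it preserves these coproducts, giving a natural isomorphism $F(K \otimes X) \cong K \otimes FX$; together with its right adjoint $G$, this exhibits the pair as a simplicially enriched adjunction.

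For the Quillen condition, Lemma~\ref{lem:RQuillen} reduces the problem to showing that $G$ preserves trivial fibrations and fibrations between fibrant objects in the canonical structures. Since the canonical structure has Reedy cofibrations, any trivial fibration has the right lifting property against every Reedy cofibration and is therefore a Reedy trivial fibration, i.e., a levelwise trivial fibration in $\aD$; such maps are preserved levelwise by $G$ since $G$ is right Quillen on the underlying categories. By the Rezk--Schwede--Shipley identification \cite[3.9]{RSS}, fibrations between fibrant objects in the canonical structure are precisely the Reedy fibrations between such objects. Reedy fibrancy of a map $f \colon X \to Y$ is the condition that each relative matching map $X_n \to Y_n \times_{M_n Y} M_n X$ is a fibration in $\aD$; since matching objects are finite limits and $G$ preserves both pullbacks and fibrations, $Gf$ is a Reedy fibration whenever $f$ is.

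It remains to verify that $G$ sends a canonically fibrant object to a canonically fibrant object, so that the image of a fibration between fibrant objects in $s\aD$ lands in the range where the characterization of \cite[3.9]{RSS} applies. By Theorem~\ref{thm:dugger}, a canonically fibrant $X$ is Reedy fibrant with all structure maps weak equivalences in $\aD$. Reedy fibrancy of $GX$ follows as above, and the structure maps in $X$ are weak equivalences between fibrant objects of $\aD$, which $G$ preserves by Ken Brown's lemma. The step I expect to require the most care is exactly this last one: $G$ does not preserve arbitrary weak equivalences of simplicial objects, but homotopic constancy is formulated only in terms of the structure maps of a Reedy fibrant diagram, and these lie between fibrant objects, so Ken Brown's lemma suffices.
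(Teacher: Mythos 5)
Your proof takes the same route as the paper: prolong the adjunction levelwise, establish the simplicial enrichment by checking the left adjoint preserves tensors (via $F(\coprod_{K_n} X_n) \cong \coprod_{K_n} FX_n$), and verify the Quillen condition via Lemma~\ref{lem:RQuillen} together with the Rezk--Schwede--Shipley identification of fibrations between canonically fibrant objects with Reedy fibrations. Your handling of fibrant objects is slightly more explicit than the paper's terse phrasing: you correctly isolate the need for Ken Brown's lemma to see that $G$ preserves the ``homotopically constant'' condition, since the structure maps of a Reedy fibrant simplicial object are weak equivalences between fibrant objects.

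One imprecision worth flagging: you write that a Reedy trivial fibration is ``i.e., a levelwise trivial fibration in $\aD$,'' and then conclude that $G$ preserves these by applying it levelwise. But a Reedy trivial fibration is strictly stronger than a levelwise one --- the defining condition is that each relative matching map $X_n \to Y_n \times_{M_n Y} M_n X$ is a trivial fibration, not merely that each $f_n$ is --- so knowing $Gf$ is a levelwise trivial fibration does not by itself place $Gf$ among the trivial fibrations of the canonical (equivalently Reedy) structure on $s\aC$. The fix is the argument you already use for Reedy fibrations: $G$ preserves the relevant limits (hence matching objects) and preserves trivial fibrations in $\aD$, so it preserves the relative matching condition directly. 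Alternatively, combine your levelwise observation with your proof that $G$ preserves Reedy fibrations: a Reedy fibration which is a levelwise weak equivalence is a Reedy trivial fibration. Either patch restores the argument to match the paper's.
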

\begin{proof}
An adjoint pair of functors $F \colon \aC \rightleftarrows \aD \colon G$ induces an adjoint pair $\overline{F} \colon s\aC \rightleftarrows  s\aD \colon \overline{G}$, defined pointwise. This latter adjunction is simplicially enriched: by a general categorical result an a priori unenriched adjoint pair of functors between tensored simplicial categories admits the structure of a simplicial adjunction if and only if the left adjoint preserves simplicial tensors up to coherent natural isomorphism. Here \[ F(K \otimes X)_n = F ((K \otimes X)_n) = F( \coprod_{K_n} X_n) \cong \coprod_{K_n} F X_n = K \otimes FX\] is the desired isomorphism.

Furthermore, if the original $F$ and $G$ are Quillen, then the prolonged adjunction is Quillen with respect to the Reedy model structures: $G$ preserves the limits defining mapping objects and (trivial) fibrations in $\aD$ characterizing Reedy (trivial) fibrations in $s\aD$. Hence, $G$ preserves Reedy (trivial) fibrations and Reedy fibrant objects. Fibrant objects in the simplicial model structure on $s\aD$ are Reedy fibrant simplicial objects whose structures maps are weak equivalences. The objects in Reedy fibrant simplicial objects are fibrant in $s\aD$; hence $G$ preserves fibrant objects. Now the result follows immediately from Lemma \ref{lem:RQuillen} and the characterization of fibrations between fibrant objects in the canonical simplicial model structure.
\end{proof}

It follows from Theorem \ref{thm:dugger},  Proposition \ref{prop:RSS}, and the commutative diagram \[  \xymatrix{ \aC \ar@<1ex>[r]^F \ar@{}[r]|{\perp} \ar@<-1ex>[d]_c \ar@{}[d]|{\dashv} & \aD \ar@<1ex>[l]^G \ar@<-1ex>[d]_c \ar@{}[d]|{\dashv} \\ s\aC \ar@<1ex>[r]^{\overline{F}} \ar@{}[r]|{\perp} \ar@<-1ex>[u]_{\mathrm{ev}_0} & s\aD \ar@<-1ex>[u]_{\mathrm{ev}_0} \ar@<1ex>[l]^{\overline{G}}}\] that any Quillen adjunction between left proper combinatorial model categories can be replaced by a Quillen equivalent simplicial Quillen adjunction. 

If $\aC$ permits the small object argument, so does $s\aC$. 

\begin{cor} For any Quillen adjunction between left proper combinatorial model categories there exists a Quillen equivalent Quillen adjunction for which one may construct simplicially enriched homotopical resolutions as described in Corollary \ref{cor:resolutions}.
\end{cor}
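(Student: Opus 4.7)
The plan is to chain together Theorem~\ref{thm:dugger} (Dugger), Proposition~\ref{prop:RSS} (Rezk--Schwede--Shipley), and Corollary~\ref{cor:simpresol} to assemble the desired simplicial replacement. Given a Quillen adjunction $F \colon \aC \rightleftarrows \aD \colon G$ between left proper combinatorial model categories, I would first apply Theorem~\ref{thm:dugger} to each side to obtain canonical simplicial model structures on $s\aC$ and $s\aD$, together with Quillen equivalences $c \dashv \mathrm{ev}_0$. Next, by Proposition~\ref{prop:RSS}, the levelwise prolongation $\overline{F} \dashv \overline{G}$ lifts to a simplicially enriched Quillen adjunction between these canonical simplicial model structures; the commutative square displayed just above the statement then exhibits $\overline{F} \dashv \overline{G}$ as Quillen equivalent to the original $F \dashv G$.

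The second step is to invoke Corollary~\ref{cor:simpresol} on the prolonged adjunction, which requires confirming that $s\aC$ and $s\aD$ are cofibrantly generated simplicial model categories that permit the small object argument. Combinatoriality passes from $\aC$ to the diagram category $\aC^{\Del^{\op}} = s\aC$, and the canonical simplicial structure arises as a left Bousfield localization of the Reedy structure, preserving combinatoriality and in particular cofibrant generation; the small object argument hypothesis is transferred by the parenthetical remark immediately preceding the statement. With these confirmed, Corollary~\ref{cor:simpresol} directly produces simplicially enriched derived resolutions for $\overline{F} \colon s\aC \rightleftarrows s\aD \colon \overline{G}$ of the form described in Corollary~\ref{cor:resolutions}.

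The argument is essentially a direct concatenation of results already established in the appendix and in section~\ref{sec:simplicial}, so I do not expect a substantive obstacle. The only nontrivial bookkeeping is verifying that cofibrant generation and the small object argument survive both Dugger's Bousfield localization and the passage to simplicial diagram categories; these are standard facts about combinatorial model categories, and the corollary then follows by immediate assembly.
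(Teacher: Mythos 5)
Your proposal follows exactly the route the paper takes: chain Dugger's Theorem~\ref{thm:dugger} and Proposition~\ref{prop:RSS} to obtain a Quillen equivalent simplicial Quillen adjunction on the canonical simplicial model structures, observe that these remain combinatorial (hence cofibrantly generated and permitting the small object argument), and then invoke Corollary~\ref{cor:simpresol}. This matches the paper's argument essentially verbatim.
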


\end{appendix}


\begin{thebibliography}{00}

\bibitem{AroneChing2} G.~Arone and M.~Ching.
\newblock A classification of Taylor towers of functors of spaces and spectra.
\newblock Preprint, arXiv:1209.5661,  2012.



\bibitem{AroneChing} G.~Arone and M.~Ching.
\newblock Operads and chain rules for calculus of functors.
\newblock Ast\'erisque {\bf 338} (2011).

\bibitem{AroneMahowald} G.~Arone and M.~Mahowald.
\newblock The {G}oodwillie tower of the identity functor and the
unstable periodic homotopy of spheres.
\newblock Invent. Math. {\bf 135} (3) (1999), 743--788.

\bibitem{basterramandell} M.~Basterra and M.~A.~Mandell.
\newblock Homology and cohomology of {$E_\infty$} ring spectra.
\newblock Math. Z. {\bf 249} 4, (2005), 903--944.

\bibitem{Bousfield}  A.~K.~Bousfield.
\newblock Cosimplicial resolutions and homotopy spectral sequences in
model categories.
\newblock {\em Geom. Top.}, 7 (2003), 1001--1053.

\bibitem{BK} A.~K.~Bousfield and D.~M.~Kan.
\newblock {\em Homotopy Limits, Completions and Localizations.}
\newblock Lecture Notes in Mathematics, 1972.

\bibitem{carlssonderived} G.~Carlsson.
\newblock Derived completions in stable homotopy theory.
\newblock J. Pure Appl. Algebra {\bf 212} 3 (2008), 550--577. 

\bibitem{dugger} D.~Dugger.
\newblock Replacing model categories with simplicial ones.
\newblock {\em Transactions of the AMS} Vol 353, No. 12 (2001), 5003--5027.

\bibitem{DHKS} W.~G.~Dwyer, P.~S.~Hirschhorn, D.~M.~Kan, and J.~H.~Smith
\newblock {\em Homotopy Limit Functors on Model Categories and Homotopical Categories.}
\newblock Mathematical Surveys and Monographs, vol. 113, 2004.

\bibitem{EKMM} A.~Elmendorf, I.~Kriz, M.~A.~Mandell, and J.P. May
\newblock {\em Rings, modules, and algebras in stable
homotopy theory.}
\newblock Mathematical Surveys and Monographs, vol 47, 1997.

\bibitem{GarnerUSOA} R.~Garner.
\newblock Understanding the small object argument. 
\newblock {\em Appl. Categ. Structures.} {\bf 17(3)} (2009), 247--285. 

\bibitem{goodwillie} T.~Goodwillie.
\newblock Calculus {III}, {T}aylor series.
\newblock Geom. and Top. {\bf 7} (2003), 645--711.

\bibitem{HarperHess} J.~Harper and K.~Hess.
\newblock Homotopy completion and topological {Q}uillen homology of 
structured ring spectra.
\newblock Preprint, arXiv:1102.1234, (2012).


\bibitem{hovey} M.~Hovey.
\newblock Spectra and symmetric spectra in general model categories.
\newblock J. Pure Appl. Alg. {\bf 165} (2001), 63--127.


\bibitem{kuhn} N.~Kuhn.
\newblock Goodwillie towers and chromatic homotopy: an overview.
\newblock Algebraic Topology, Proc. Kinosaki, Japan, 2003, Geometry
and Topology Monographs {\bf 10} (2007), 245--279.

\bibitem{HA} J.~Lurie
\newblock {\em Higher Algebra}. 
\newblock 2012, Preprint, available at {\tt www.math.harvard.edu/$\sim$lurie}.

\bibitem{HTT} J.~Lurie.
\newblock {\em Higher Topos Theory}.
\newblock Annals of Mathematics Studies, 170, Princeton University Press, 2009.

\bibitem{maltsiniotis} G.~Maltsiniotis.
\newblock {Le th\'eor\`eme de {Q}uillen, d'adjonction des foncteurs
              d\'eriv\'es, revisit\'e}.
\newblock {\em C. R. Math. Acad. Sci. Paris}, {\bf 344 (9)} (2007),  {549--552}.

\bibitem{Peirera}
L.~Peirera.
\newblock A general context for Goodwillie Calculus.
\newblock MIT Thesis, 2013.

\bibitem{radulescu-banu-thesis} A.~Radulescu-Banu.
\newblock Cofibrance and completion. 
\newblock Ph.D. thesis, Massachusetts Institute of Technology, 1999.

\bibitem{Riehl} E.~Riehl.
\newblock Algebraic model structures
\newblock {\em New York J. Math.}, 17 (2011), 173--231.

\bibitem{riehlcathtpythy} E.~Riehl.
\newblock {\em Categorical homotopy theory}. 
\newblock New Mathematical Monographs 24, Cambridge University Press, 2014.

\bibitem{riehlverity} E.~Riehl, D.~Verity.
\newblock Homotopy coherent adjunctions and the formal theory of monads.
\newblock Preprint, arXiv:1310.8279, (2013).

\bibitem{RSS} C.~Rezk, S.~Schwede, B.~Shipley.
\newblock Simplicial structures on model categories of functors.
\newblock {\em Amer. J.  Math.} {\bf 123(3)} (2001), 551--575. 

\bibitem{schanuelstreet} S.~Schanuel, R.~Street
\newblock The free adjunction.
\newblock Cahiers de topologie et g\'{e}om\'{e}trie diff\'{e}rentielle cat\'{e}goriques {\bf 27(1)} (1986), 81--83.

\bibitem{Shulman} M.~Shulman.
\newblock Homotopy limits and colimits and enriched homotopy theory.
\newblock Preprint, arXiv:math/0610194, 2006.

\end{thebibliography}
\end{document}